\let\emptyset\varnothing
\def\multiset#1#2{\ensuremath{\left(\kern-.3em\left(\genfrac{}{}{0pt}{}{#1}{#2}\right)\kern-.3em\right)}}
\def\oversortoftilde#1{\mathop{\vbox{\m@th\ialign{##\crcr\noalign{\kern3\p@}%
      \sortoftildefill\crcr\noalign{\kern3\p@\nointerlineskip}%
      $\hfil\displaystyle{#1}\hfil$\crcr}}}\limits}
\def\sortoftildefill{$\m@th \setbox\z@\hbox{$\braceld$}%
  \braceld\leaders\vrule \@height\ht\z@ \@depth\z@\hfill\braceru$}
  \newcommand{\Ass}{\operatorname{Ass}}
  \newcommand{\reg}{\operatorname{reg}}
 \newcommand{\depth}{\operatorname{depth}}
\newcommand{\inma}{\operatorname{im}}
\newcommand{\proset}{\,\mathrel{\lower 4pt\hbox{$\scriptscriptstyle/$}
\mkern -14mu\subseteq }\,} 
 \newtheorem{theorem}{Theorem}[section]
 \newtheorem{corollary}[theorem]{Corollary}
 \newtheorem{lemma}[theorem]{Lemma}
 \newtheorem{proposition}[theorem]{Proposition}
 \newtheorem{conjecture}[theorem]{Conjecture}
 \theoremstyle{definition}
 \newtheorem{remark}[theorem]{Remark}
 \newtheorem{definition}[theorem]{Definition}
 \newtheorem{example}[theorem]{Example}
\title[Asymptotic behaviour and stability index of v-numbers]{Asymptotic behaviour and stability index of v-numbers of graded ideals}
\author{Prativa Biswas, Mousumi Mandal and Kamalesh Saha }
\thanks{AMS Classification 2010: 13A02, 13F20, 13F55, 05E40, 05C38}
\thanks{Key words and phrases: v-number, graded ideals, asymptotic behaviour, linear powers, stability index}
\address{Department of Mathematics, Indian Institute of Technology Kharagpur, 721302, India}\email{prativabiswassnts@kgpian.iitkgp.ac.in}
\address{Department of Mathematics, Indian Institute of Technology Kharagpur, 721302, India}\email{mousumi@maths.iitkgp.ac.in}
\address{Chennai Mathematical Institute, Siruseri, Chennai, Tamil Nadu 603103, India}\email{ksaha@cmi.ac.in}
\begin{document}

\maketitle
\begin{abstract}
Recently, Ficarra and Sgroi initiated the study of v-numbers of powers of graded ideals. They proved that for a graded ideal $I$ in a polynomial ring $S$, $\mathrm{v}(I^k)$ is a linear function in $k$ for $k>>0$. Later, Ficarra conjectured that if $I$ is a monomial ideal with linear powers, then $\mathrm{v}(I^k)=\alpha(I)k-1$ for all $k\geq 1$, where $\alpha(I)$ denotes the initial degree of $I$. In this paper, we generalize this conjecture for graded ideals. We prove this conjecture for several classes of graded ideals: principal ideals, ideals $I$ with $\depth(S/I)=0$, cover ideals of graphs, $t$-path ideals, monomial ideals generated in degree $2$, edge ideals of weighted oriented graphs. We reduce the conjecture for several classes of graded ideals (including square-free monomial ideals) by showing it is enough to prove the conjecture for $k=1$ only. We define the stability index of the $\mathrm{v}$-number for graded ideals and investigate the stability index for edge ideals of graphs. 
\end{abstract}

\section{Introduction}
Let $S=K[x_1,\ldots,x_n]=\bigoplus_{d\geq 0}S_d$ be a standard graded polynomial ring in $n$ variables over a field $K$ and $I\subset S$ be a graded ideal. We denote the set of associated primes of $I$ by $\Ass(I).$ The minimal elements of $\Ass(I)$ with respect to inclusion are called the \textit{minimal primes} of $I$, and the rest of the elements are called the \textit{embedded primes} of $I$. The idea of the invariant in this context, namely $\mathrm v$-number, was introduced in \cite{cooper}, and further studied in \cite{ass23, biswasmandal23, civan, ficarra, sgroi, rey, jaramillo, jsvbinom23, kotalsaha23, kamalesh, saha} for graded ideals. Recently, \cite{concavnum}, \cite{luca} and \cite{ghosh} considered a more general description of this invariant. In this article, we focus on the behaviour of the $\mathrm{v}$-number of powers of graded ideals. The definition of the \textit{$\mathrm{v}$-number} of a proper graded ideal $I\subset S$, denoted by $\mathrm v(I)$, is as follows:
$$\mathrm v(I):=\min\{d\geq 0 ~|~ \exists f\in S_d\text{ and } \mathfrak{p}\in \Ass(I) ~\mbox{with}~ (I:f)=\mathfrak{p}\}.$$
    For each $\mathfrak{p}\in \Ass(I)$, the $\mathrm v$-number of $I$ at $\mathfrak{p}$, known as the \textit{local $\mathrm{v}$-number} of $I$ at $\mathfrak{p}$, is denoted by $\mathrm v_\mathfrak{p}(I)$ and defined as
  \begin{center}
      $\mathrm v_\mathfrak{p}(I):=\min\{d\geq 0 ~|~ \exists f\in S_d ~\mbox{with} ~(I:f)=\mathfrak{p}\}$.
  \end{center}
 In \cite{jaramillo}, Jaramillo and Villarreal studied the $\mathrm{v}$numberod edge ideals. It has been proved for several classes of graphs that $\mathrm{v}(I(G))\leq \reg(S/I(G))$ (see \cite{rey, jaramillo, saha}), where $I(G)$ denotes the edge ideals of a graph $G$. Later, Civan \cite{civan} proved that $\mathrm{v}(I(G))$ can be arbitrarily larger than $\reg(S/I(G))$. However, in \cite{kamalesh}, the third author proved that for the cover ideals of graphs (Alexander dual of edge ideals), the $\mathrm{v}$-number serves as a sharp lower bound for the regularity of the corresponding quotient ring. Also, in \cite{kotalsaha23}, the authors investigate when the $\mathrm{v}$-number can be greater than or equal to the regularity. Hence, people are interested in seeing how the $\mathrm{v}$-number behaves by relating it with the studies of other known invariants. One of the remarkable works in the theory of Castelnuovo-Mumford regularity is the study of the asymptotic behaviour of the regularity. In \cite{cutkosky},  Cutkosky \textit{et al.}, and in \cite{kodi} independently, Kodiyalam proved that the regularity of $I^k$ is an asymptotic linear function in $k$. Motivated by this study, Ficarra and Sgroi initiated the study of asymptotic $\mathrm{v}$-numbers of graded ideals \cite{sgroi}. Surprisingly, the so-called $\mathrm{v}$-function $\mathrm{v}(I^k)$ turns out to be an asymptotic linear one (see \cite[Theorem 3.1]{sgroi}), i.e., for $k$ large enough, $\mathrm v(I^k)$ is a linear function in $k$ of the form 
 $$\mathrm v(I^k)=\alpha(I)k+b,$$
 where $b\in\mathbb{Z}$ and $\alpha(I)=\min\{\deg(f)~|~f\in I\setminus\{0\}\}$. Therefore, we can define the \textit{stage of stabilization} or \textit{stability index} of the $\mathrm{v}$-number of $I$ as follows:
 $$\text{v-stab}(I):=\min\{t\mid \mathrm{v}(I^k)=\alpha(I)k+b,\text{ for all } k\geq t\}.$$
 In \cite{ficarra}, Ficarra conjectured that if $I\subset S$ is a monomial ideal with linear powers, then $\mathrm{v}(I^k)=\alpha(I)k-1$. If a monomial ideal $I$ has linear powers, then the regularity of $I^k$ attains its minimum value for all $k\geq 1$, i.e., $\reg(I^k)=\alpha(I^k)=\alpha(I)k$. Thus, the $\mathrm{v}$-number may attain its minimum value whenever $I$ has linear powers. This was the motivation behind Ficarra's conjecture. The author verified the conjecture for several classes of monomial ideals with linear powers in \cite{ficarra}.\par 

In this paper, we give a sharp lower bound of the $\mathrm{v}$-number for graded ideals, which helps us to generalize \cite[Conjecture 2.6]{ficarra} for arbitrary graded ideals as follows:
\medskip

\noindent \textbf{Conjecture \ref{conjlp}.} \textit{Let $I\subset S$ be a graded ideal with linear powers. Then 
$$\mathrm{v}(I^k)=\alpha(I)k-c(I)~\text{for all}~k\geq 1,$$
where $c(I):=\max\{\alpha(\mathfrak{p})\mid \mathfrak{p}\in\Ass(I)\}$.}
\medskip

\noindent This conjecture is one of the most important parts of this paper. We investigate when the lower bound can be attained. In several cases, we have shown that if $\mathrm{v}(I)$ attains its lower bound, then $\mathrm{v}(I^k)$ does so. We prove Conjecture \ref{conjlp} for several classes of graded ideals. Also, for some classes (including square-free monomial ideals), we reduce the conjecture in the sense that it is enough to prove the conjecture for $k=1$. Furthermore, we investigate some sufficient conditions for which $\mathrm v(I^k)$ is a linear function from the very beginning. In the last section of this paper, we study the asymptotic behaviour of the $\mathrm{v}$-number of edge ideals of graphs. We explicitly find the linear function $\mathrm{v}(I(G)^k)$ for $k>>0$. Also, we investigate the stability index of $\mathrm{v}$-number for edge ideals, i.e., investigate the integer $k_0$ such that $\mathrm{v}(I(G)^k)$ is a linear function in $k$ for all $k\geq k_0$. The paper is organized in the following manner.\par 

Section \ref{secpreli} of this paper is on some prerequisite definitions, notions and results for a better understanding of the subsequent sections. In Section \ref{secgradedconjonv}, we first derive a tight lower bound of the $\mathrm{v}$-number for graded ideals (Lemma \ref{low}). Then, we show that if $I$ is a normally torsion-free graded ideal without any embedded prime or $I$ is an equigenerated graded ideal having strong persistence property, then the $\mathrm{v}$-number of $I^k$ attains the lower bound given in Lemma \ref{low} whenever $\mathrm{v}(I)$ does so (Proposition \ref{propntf}, \ref{propsppv}). Next, we generalize Ficarra's conjecture (\cite[Conjecture 2.6]{ficarra}) from monomial ideals to graded ideals (see Conjecture \ref{conjlp}). In Theorem \ref{thmprincipal} and \ref{thmdepth0}, we prove Conjecture \ref{conjlp} for two classes of graded ideals, which may not be monomial ideals. Section \ref{secmonv} of this paper is devoted to studying the $\mathrm{v}$-number of powers of certain classes of monomial ideals and finding some monomial class in support of Conjecture \ref{conjlp}. First, we reduce Conjecture \ref{conjlp} for square-free monomial ideals in Theorem \ref{thmsqfree}, which says for a square-free monomial ideal with linear powers, it is enough to prove the conjecture for $k=1$ only. As a corollary of Theorem \ref{thmsqfree}, we prove Conjecture \ref{conjlp} for cover ideals of graphs with linear powers (Corollary \ref{corcover}), $t$-path ideals with linear powers (Corollary \ref{cort-path}). We generalize Theorem \ref{thmsqfree} for monomial ideals with some conditions. In Proposition \ref{propv=1}, we show that if $I$ is a monomial ideal with $\mathrm{v}(I)=1$, then $\mathrm{v}(I^k)=\alpha(I)k-1=2k-1$ for all $k\geq 1$. This may not hold for monomial ideal $I$ with $\mathrm{v}(I)>1$ or $\mathrm{v}(I)\alpha(I)-1$ (see Example \ref{exampowerneqlb}). Finally, we prove Conjecture \ref{conjlp} for two more classes of monomial ideals (need not be square-free), such as monomial ideal generated in degree $2$ having linear resolution (Theorem \ref{thmdeg2mon}) and edge ideals of weighted oriented graphs having linear resolution (Theorem \ref{thmwoglp}). We end this section by computing the exact value of $\mathrm{v}$-numbers of powers of vertex splittable ideals (see Theorem \ref{thmvsplit}). In Section \ref{secvpoweredge}, we investigate the stability index of $\mathrm{v}$-number for edge ideals of graphs. We show in Theorem \ref{bound} that for a connected graph $G$ with $m$ edges, $\mathrm{v}(I(G)^k)=2k-1$ for all $k\geq m+1$, i.e., $\text{v-stab}(I(G))\leq m+1$. However, for disconnected graphs, the linear function $\mathrm{v}(I(G))^k$ for large $k$ may not be equal to $2k-1$ (see Example \ref{examdiscon}). Thus, we may expect that for an equigenerated square-free monomial ideal $I$, which can not be written as a sum of two square-free monomial ideals in two polynomial rings with disjoint variables, that $v(I^k)=\alpha(I)k-1$ for $k>>0$. But this fact also may not be true and is clear from Example \ref{examsqfreenottrue}. We give a better upper bound (sharp) of $\text{v-stab}(I(G))$ for any bipartite graph $G$ (Theorem \ref{thmvstabbipartite}). As a corollary, we get $\mathrm{v}(I(G)^k)=2k-1$ for all $k\geq [\frac{n}{2}]-1$ if $G$ is a path (Corollary \ref{corvstabpath}) or even cycle (Corollary \ref{corvstabevencycle}). For odd cycles, we prove that $\mathrm{v}(I(G)^k)=2k-1$ for all $k\geq [\frac{n}{2}]$ (Proposition \ref{propvstaboddcycle}).

\section{Preliminaries}\label{secpreli}

In this section, we recall preliminary notions and terminologies that will be used throughout the article.
\par

In $S$, an element is said to be a \textit{monomial} if it is of the form $x_{1}^{a_1}\ldots x_{t}^{a_t}$, where $(a_1,\ldots,a_t)\in \mathbb{Z}_{\geq 0}^{t}$ and $\mathbb{Z}_{\geq 0}$ is the set of all non-negative integers. An ideal $I\subset S$ is said to be a \textit{monomial ideal} if it is minimally generated by a set of monomials in $S$. For a monomial ideal $I$, the set of minimal generators of $I$ is unique, and we denote it by $\mathcal{G}(I)$. 

\begin{definition}
    Let $I,J\subset S$ be two ideals. Then $(I:J):=\{f\in S\mid fg\in I~\mbox{for all}~g\in J\}$ is an ideal of $S$, known as the \textit{colon ideal} of $I$ with respect to $J$. For $f\in S$, we write $(I:f):=(I:\langle f\rangle)$. By \cite[ Proposition 1.2.2]{Hibi}, for a monomial ideal $I\subset S$ and a monomial $f\in S$, we have
    $$(I:f)=\big\langle \frac{u}{\mathrm{gcd}(u,f)}\mid u\in\mathcal{G}(I)\big\rangle.$$
\end{definition}

\begin{definition}
    A simple graph ${G}$ is defined by a pair $(V({G}), E({G}))$, where $V({G})$ is a finite set, called \textit{vertex set} of $G$ and  $E({G})$, called \textit{edge set} of $G$, is the family of subsets of $V(G)$ such that they are pairwise incomparable with respect to inclusion and the cardinality of each element of $E({G})$ is two. 
\end{definition}
Suppose ${G}$ is a graph with $V({G})=\{x_1,x_2,\ldots,x_n\}$ and $E({G})$ as edge set. We consider each vertex $x_i$ as a variable of the polynomial ring $S=K[x_1,x_2,\ldots,x_n]$ in $n$ variable over a field $K$, and by defining the following ideal, we can relate each graph to an ideal.

The edge ideal associated with a graph $G$ is the monomial ideal
    $$I({G}):=\langle\prod_{x_i\in e}x_i~|~e\in E({G})\rangle.$$
A subset $C \subset V({G})$ is called a vertex cover of ${G}$ if $C \cap e 	\neq \emptyset$ for all $e \in E({G})$. If a
vertex cover is minimal with respect to inclusion, then we call it a minimal vertex cover.
\\Another associated monomial ideal with graph is the cover ideal, denoted by $J({G})$ and defined as follows:
$$J({G})=\langle \displaystyle\prod_{x_i\in C}x_i~|~C~\mbox{is a minimal vertex cover of }{G}\rangle.$$

 These edge ideals and cover ideals are square-free monomial ideals. Hence we can talk about $\mathrm v$-number of edge ideal. 
Next, we recall Some definitions related to graphs that we have considered in this paper:
 \begin{definition}

 For a vertex $x$ in $V(G)$, the neighbor set of $x$ in ${G}$, denoted by $N_{{G}}(x)$, is defined by
    $$N_{{G}}(x)=\{x_i\in V({G})~|~\{x_i,x\} ~\mbox{is}~ \text{an edge}~ of~{G}\}.$$
The degree of $x$, denoted by $\deg (x)$, is 
$\deg (x)=|N_{{G}}(x)|.$
    
 \end{definition}
\begin{definition}

    If $M$ is a set of pairwise disjoint edges of a graph, then $M$ is called a matching. An induced matching of a graph ${G}$ is a matching $M=\{e_1,e_2,\ldots,e_m\}$ of ${G}$ such that the only edges of ${G}$ contained in $\displaystyle\bigcup_{i=1}^{m}{e_i}$
 are $e_1,e_2,\ldots,e_m$.
\\  The induced matching number of ${G}$ is the number of edges in the largest induced matching and is denoted by $\mbox{im}({G})$.
\end{definition}
\begin{definition}
    A simple graph is said to be a path if its vertices can be ordered such that two vertices are adjacent if and only if they are consecutive in the list. If a path has $n$ vertices, then it is denoted by $P_n$.
\end{definition}
\begin{definition}
    A cycle is a simple graph with the same number of vertices and edges whose vertices can be placed around a circle so that two vertices are adjacent if and only if they appear consecutively along the circle. A cycle with $n$ vertices is denoted by $C_n$.
\end{definition}
\begin{definition}
    a simple graph $G$ is bipartite if and only if its vertex set $V(G)$ can be partitioned into two non-empty subsets $X$ and $Y$, such that every edge in $E(G)$ has one endpoint in $X$ and the other endpoint in $Y$.
\end{definition}
\begin{definition}[\cite{ban}]
    Two vertices $u$ and $ v$ ($u$ may be same as $ v$) are said to be even connected with respect to an $s$-fold product $e_1e_2\ldots e_s$ if there exists a path $x_1x_2\ldots x_{2k+2}$, $k\geq2$ in graph $G$ such that:
    \begin{enumerate}[(i)]
    \item $x_1=u,~x_{2k+2}=v.$
    \item For all $1\leq l\leq k$, $x_{2l}x_{2l+1}=e_i$ for some $e_i$.
    \item For all $i$,
    $$\{l\geq 1~|~x_{2l}x_{2l+1}=e_i\}\leq |\{j\mid e_j=e_i\}|.$$
    \item For all $1\leq r\leq 2k+1$, $x_rx_{r+1}$ is an edge in $G$.
    \end{enumerate}
    
\end{definition}
In similar manner, we can define odd connected vertices.
Now the below-mentioned theorem gives the idea of how to generate even-connected edges.
\begin{theorem}\cite[Theorem 6.7.]{ban}\label{even}
Every generator $uv$ $(u$ may be equal to $v)$ of $(I(G)^{s+1}:e_1e_2\ldots e_s)$ is either an edge of $G$ or even-connected with respect to $e_1\ldots e_s,~s\geq1.$
    \end{theorem}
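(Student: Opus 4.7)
The plan is to translate the colon-ideal membership into a concrete monomial identity and then extract from it an alternating walk in $G$ that witnesses even-connectedness.

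First, observe that $(I(G)^{s+1}:e_1\cdots e_s)$ is a monomial ideal, so all its minimal generators are monomials. For a degree-two generator $uv$, the product $uv\cdot e_1\cdots e_s$ has degree $2(s+1)$ and lies in $I(G)^{s+1}$; since $I(G)^{s+1}$ is generated in this same degree by products of $s+1$ edges, one obtains the monomial equality
$$uv\cdot e_1\cdots e_s=f_1f_2\cdots f_{s+1},$$
with $f_1,\ldots,f_{s+1}\in E(G)$ (allowing repetition). If $uv=f_i$ for some $i$, then $uv$ is already an edge of $G$ and we are done. Otherwise, the task is to produce an even-connected walk from $u$ to $v$ with respect to $e_1\cdots e_s$.

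Form the multigraph $H$ on $V(G)$ whose edge multiset is $\{e_1,\ldots,e_s\}\cup\{f_1,\ldots,f_{s+1}\}$, and equate multiplicities of each variable $x$ in the above identity to get
$$\sum_{i=1}^{s+1}\mathbf{1}_{x\in f_i}=\sum_{j=1}^{s}\mathbf{1}_{x\in e_j}+\mathbf{1}_{x=u}+\mathbf{1}_{x=v}.$$
Consequently $\deg_H(x)=2\sum_{j=1}^{s}\mathbf{1}_{x\in e_j}+\mathbf{1}_{x=u}+\mathbf{1}_{x=v}$, which is even at every vertex except possibly $u$ and $v$. I then construct the required walk $u=x_1,x_2,\ldots,x_{2k+2}=v$ greedily by alternating an $f$-edge at odd positions and an $e$-edge at even positions. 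At each current vertex $y$, the parity identity guarantees that if the next step is an $f$-edge, the remaining $f$-multiplicity at $y$ strictly exceeds its remaining $e$-multiplicity, and so an unused $f$-edge at $y$ is available; and if the next step is an $e$-edge (having just arrived by an $f$-edge), the remaining $e$- and $f$-multiplicities at $y$ coincide, again providing an unused $e$-edge at $y$. Since the edge budget is finite and the only parity imbalance lies at $u$ and $v$, the walk must terminate precisely at $v$. Conditions (i)--(iv) of the even-connected definition then read off: the endpoints are $u$ and $v$, the even-position edges lie in $\{e_1,\ldots,e_s\}$ with multiplicities respected by the bookkeeping, and every consecutive pair $x_rx_{r+1}$ lies in the multiset $\{e_j\}\cup\{f_i\}\subseteq E(G)$.

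The principal obstacle is this last construction: showing that the greedy alternating walk never gets stuck and ends exactly at $v$. This is essentially an Eulerian-trail argument, but constrained by the requirement that edges of the two colors $e$ and $f$ strictly alternate along the walk. Careful tracking of remaining multiplicities at each vertex, combined with the parity identity derived above, is what ultimately drives the argument through.
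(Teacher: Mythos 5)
The paper gives no proof of this statement: it is quoted verbatim from Banerjee \cite[Theorem 6.7]{ban}, so the only comparison available is with the argument in that reference, which starts from the same monomial identity but is organized as an induction on $s$, building the even-connected path edge by edge. Your route is a legitimate alternative and is essentially correct. The opening reduction is right: a quadratic generator $uv$ of the colon ideal (note the statement already presupposes quadratic generation, which is a separate theorem in \cite{ban}; your argument, like the statement, only needs to treat degree-two generators) gives $uv\,e_1\cdots e_s=f_1\cdots f_{s+1}$ exactly, by comparing degrees, and the exponent identity you write down is the correct bookkeeping tool. The alternating-walk construction can indeed be pushed through: the right invariant is that at any vertex $y\notin\{u,v\}$ every completed pass consumes one $f$- and one $e$-edge at $y$, so the $f$- and $e$-balances before an arrival agree there, while $u$ and $v$ carry an $f$-surplus of one (two if $u=v$); hence after arriving by an $f$-edge at $y\neq v$ an unused $e$-edge at $y$ remains, after arriving by an $e$-edge an unused $f$-edge remains, the walk can only halt upon an $f$-arrival at $v$, and finiteness of the edge pool forces this to happen. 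Since $uv\notin E(G)$ the first $f$-edge cannot land on $v$ (and $G$ has no loops for the case $u=v$), so at least one $e$-edge is used, and removing $e$-edges from the pool as you go gives the multiplicity condition (iii). Two small imprecisions to fix in a final write-up: your phrase that the remaining $e$- and $f$-multiplicities ``coincide'' is only true at intermediate vertices and only if counted before removing the edge just traversed (at $u$ and $v$ there is an $f$-surplus, which is harmless since it only helps the $f$-steps); and the parity of $\deg_H$ is not what drives the argument --- what you actually need, and implicitly use, is the signed balance $\deg_f-\deg_e=\mathbf{1}_{x=u}+\mathbf{1}_{x=v}$ together with the fact that each complete pass preserves it. With those adjustments your proof is complete; compared with Banerjee's induction it trades a shorter inductive step for a self-contained Eulerian-style existence argument, which some readers may find more transparent.
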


For a non-zero graded ideal $I$, define $\alpha (I):=\min\{\deg (f)~|~f\in I\setminus \{0\}\}$.

Next, we define another important invariant of commutative algebra.
    Let $M$ be a finitely generated $S$ module. We can write the graded minimal free
resolution of $M$ in the following form
    $$0\rightarrow\displaystyle\bigoplus_{j} S(-j)^{\beta_{g,j}}\rightarrow\cdot\cdot\cdot\rightarrow\displaystyle\bigoplus_j S(-j)^{\beta_{1,j}}\rightarrow \displaystyle\bigoplus S(-j)^{\beta_{0,j}}\rightarrow M\rightarrow 0,$$
    where $\beta_{ij}$ is the $(i,j)$-th graded Betti number of $M$ and $S(-j)$ denote the polynomial ring shifted in degree $j$.
\begin{definition}
    The Castlenuovo-Mumford regularity (or regularity in short) of $M$, denoted as $\reg(M)$, is defined by 
    $$\reg(M)=\max\{j-i~|~\beta_{i,j}\neq 0\}.$$
\end{definition}
Another important term related to regularity is linear resolution. An equigenerated graded ideal $I$ has linear resolution, if and only if, $\reg(I)=\alpha(I).$ It is said that $I$ has linear power if $I^k$ has linear resolution, for all $k\geq1.$

\section{Generalization of Ficarra's conjecture}\label{secgradedconjonv}

In this section, we derive a general lower bound of $\mathrm{v}$-number for graded ideals and generalize Ficarra's conjecture on the $\mathrm{v}$-numbers of powers of monomial ideals having linear powers for graded ideals. We give some sufficient conditions for $\text{v-stab}(I)$ to be $1$, and the linear function would be the possible lower bound of $\mathrm{v}(I^k)$. We provide certain classes of graded ideals (need not be monomial) in support of our conjecture.\par 

The following lemma is the generalization of \cite[Proposition 2.2]{ficarra}.

\begin{lemma}\label{low}
    Let $I\subset S$ be a graded ideal. Then for every $\mathfrak{p}\in\Ass(I)$, we have
    $\mathrm{v}_{\mathfrak{p}}(I)\geq\alpha(I)-\alpha(\mathfrak{p})$, and hence, $\mathrm{v}(I)\geq \alpha(I)-c(I)$. Moreover, if all the associated primes of $I$ are generated by linear forms (for example, $I$ is a monomial ideal), then $$\mathrm{v}_{\mathfrak{p}}(I)\geq \mathrm{v}(I)\geq \alpha(I)-1 \text{ for all } \mathfrak{p}\in\Ass(I).$$
\end{lemma}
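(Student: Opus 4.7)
The statement is a degree comparison, and the plan is to exploit the fact that $S$ is an integral domain, so that whenever we multiply two nonzero forms we can add their degrees.

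First, let $\mathfrak{p}\in\Ass(I)$ and pick any $f\in S_d$ realizing $\mathrm{v}_{\mathfrak{p}}(I)$, i.e.\ with $(I:f)=\mathfrak{p}$ and $d=\mathrm{v}_{\mathfrak{p}}(I)$. Because $\mathfrak{p}$ is a proper ideal (it is an associated prime of the proper ideal $I$, hence contained in the irrelevant maximal ideal), the colon $(I:f)$ is proper as well; in particular $f\neq 0$, since otherwise $(I:f)=S$. Next, pick a nonzero homogeneous element $g\in\mathfrak{p}$ of minimal degree, so $\deg(g)=\alpha(\mathfrak{p})$.

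The crucial step is to observe $g\in\mathfrak{p}=(I:f)$, which forces $gf\in I$. Since $S$ is a domain and both $g$ and $f$ are nonzero, the product $gf$ is a nonzero homogeneous element of $I$, whence
\[
\alpha(I)\;\leq\;\deg(gf)\;=\;\deg(g)+\deg(f)\;=\;\alpha(\mathfrak{p})+d.
\]
Rearranging gives $\mathrm{v}_{\mathfrak{p}}(I)=d\geq \alpha(I)-\alpha(\mathfrak{p})$, which is the first claim. Taking the minimum over all $\mathfrak{p}\in\Ass(I)$ and using $\mathrm{v}(I)=\min_{\mathfrak{p}\in\Ass(I)}\mathrm{v}_{\mathfrak{p}}(I)$ yields
\[
\mathrm{v}(I)\;\geq\;\min_{\mathfrak{p}\in\Ass(I)}\bigl(\alpha(I)-\alpha(\mathfrak{p})\bigr)\;=\;\alpha(I)-\max_{\mathfrak{p}\in\Ass(I)}\alpha(\mathfrak{p})\;=\;\alpha(I)-c(I).
\]

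For the ``moreover'' part, if every associated prime of $I$ is generated by linear forms, then each nonzero $\mathfrak{p}\in\Ass(I)$ contains a nonzero linear form, so $\alpha(\mathfrak{p})=1$ for all such $\mathfrak{p}$, giving $c(I)=1$; the inequality $\mathrm{v}_{\mathfrak{p}}(I)\geq \mathrm{v}(I)$ is immediate from the definitions. The monomial ideal case is covered because the associated primes of a monomial ideal are generated by subsets of the variables. There is really no obstacle here beyond the bookkeeping of verifying $f\neq 0$ and $g\neq 0$ so that the domain property of $S$ may be applied; the whole argument is a one-line degree estimate.
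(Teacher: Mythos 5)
Your proof is correct and follows essentially the same route as the paper: take $f$ realizing $\mathrm{v}_{\mathfrak{p}}(I)$, multiply by a minimal-degree element of $\mathfrak{p}$ to land in $I$, and compare degrees, then pass to the minimum over associated primes for the second inequality. The only difference is your explicit (and harmless) bookkeeping that $f$ and $g$ are nonzero so that degrees add, which the paper leaves implicit.
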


\begin{proof}
Let $f\in S$ be a homogeneous polynomial such that $(I:f)=\mathfrak{p}$ for some $\mathfrak{p} \in \Ass(I)$ and $\deg (f)=\mathrm{v}_{\mathfrak{p}}(I)$. Now, choose an element $a\in\mathfrak{p}$ such that $\deg(a)=\alpha(\mathfrak{p})$. Then we have $fa\in I$, which implies $\deg(fa)\geq \alpha(I)$. Therefore, $\deg(f)\geq \alpha(I)-\deg(a)=\alpha(I)-\alpha(\mathfrak{p})$. Thus, $\mathrm{v}_{\mathfrak{p}}(I)\geq \alpha(I)-\alpha(\mathfrak{p})$. Suppose $\mathrm{v}(I)=\mathrm{v}_{\mathfrak{p}}(I)$ for some $\mathfrak{p}\in\Ass(I)$. Then $\mathrm{v}(I)\geq \alpha(I)-\alpha(\mathfrak{p})\geq \alpha(I)-c(I)$.\par
If all the associated primes of $I$ are generated by linear forms, then $\alpha(\mathfrak{p})=1$ for all $\mathfrak{p}\in\Ass(I)$. Hence, the result follows.
\end{proof}

\begin{definition}\label{defntf}{\rm
    A proper ideal $I$ of a commutative Noetherian ring $A$ is said to be \textit{normally torsion-free} if $\Ass(I^k)$ is contained in $\Ass(I)$ for all $k\geq 1$.
}
\end{definition}

If $I$ is a normally torsion-free ideal with no embedded prime, then it follows from Definition \ref{defntf} that $\Ass(I^k)=\Ass(I)$ for all $k\geq 1$. Again, if $I$ has no embedded prime, then $I$ is normally torsion-free if and only if $I^k=I^{(k)}$ for all $k\geq 1$, where $I^{(k)}$ denote the $k$-th symbolic power of $I$ (see \cite[Proposition 4.3.29]{vil15}).

\begin{proposition}\label{propntf}
    Let $I\subset S$ be a graded ideal having no embedded prime. Suppose $I$ is normally torsion-free (equivalently, $I^k=I^{(k)}$ for all $k\geq 1$). If $\mathrm v_{\mathfrak{p}}(I)=\alpha(I)-\alpha(\mathfrak{p})$ for some $\mathfrak{p}\in \Ass(I)$, then $\mathrm v_{\mathfrak{p}}(I^k)=\alpha(I)k-\alpha(\mathfrak{p})$ for all $k\geq 1$. Moreover, if $\mathrm{v}(I)=\alpha(I)-c(I)$, then $\mathrm{v}(I^k)=\alpha(I)k-c(I)$ for all $k\geq 1$.
\end{proposition}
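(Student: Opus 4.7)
The plan is to prove the local statement first and then derive the global one. Fix $\mathfrak p\in\Ass(I)$ with $\mathrm v_\mathfrak p(I)=\alpha(I)-\alpha(\mathfrak p)$, let $g\in S$ be a homogeneous witness (so $\deg g=\alpha(I)-\alpha(\mathfrak p)$ and $(I:g)=\mathfrak p$), and fix a homogeneous $r\in I$ of degree $\alpha(I)$. The candidate witness for $I^k$ is $f:=g\,r^{k-1}$, of degree $k\alpha(I)-\alpha(\mathfrak p)$. Since any nonzero homogeneous element of $I^k$ has degree at least $k\alpha(I)$, and since $\alpha(\mathfrak p)\geq 1$ (a graded proper prime contains no units), $\deg f<\alpha(I^k)$ and hence $f\notin I^k$. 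The easy inclusion $\mathfrak p\subseteq (I^k:f)$ is immediate: for $a\in\mathfrak p$, $ag\in I$ by the choice of $g$, so $af=(ag)\,r^{k-1}\in I\cdot I^{k-1}=I^k$.

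The main obstacle is the reverse inclusion $(I^k:f)\subseteq\mathfrak p$, and I plan to handle it by an associated-primes argument rather than an element-chase inside $I^k$. Consider the natural injection $S/(I^k:f)\hookrightarrow S/I^k$ sending $\bar 1\mapsto\bar f$; this yields $\Ass(S/(I^k:f))\subseteq\Ass(S/I^k)$. Because $I$ has no embedded prime and is normally torsion-free, $\Ass(S/I^k)=\Ass(S/I)=\Min(I)$. Any $\mathfrak q\in\Ass(S/(I^k:f))$ contains $(I^k:f)\supseteq\mathfrak p$, but $\mathfrak q$ and $\mathfrak p$ are both minimal primes of $I$, and distinct minimal primes are incomparable; hence $\mathfrak q=\mathfrak p$. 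Since $f\notin I^k$, the set $\Ass(S/(I^k:f))$ is nonempty, so it equals $\{\mathfrak p\}$ and $(I^k:f)$ is $\mathfrak p$-primary. Every primary ideal is contained in its radical, so $(I^k:f)\subseteq\rad((I^k:f))=\mathfrak p$, and combined with the opposite inclusion we obtain $(I^k:f)=\mathfrak p$. Together with Lemma~\ref{low} (which, using $\alpha(I^k)=k\alpha(I)$, supplies the matching lower bound), this gives $\mathrm v_{\mathfrak p}(I^k)=k\alpha(I)-\alpha(\mathfrak p)$.

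For the global statement, assume $\mathrm v(I)=\alpha(I)-c(I)$. Then the minimum in $\mathrm v(I)=\min_{\mathfrak p}\mathrm v_{\mathfrak p}(I)$ is attained at some $\mathfrak p^{*}\in\Ass(I)$, and comparing with Lemma~\ref{low} forces $\alpha(\mathfrak p^{*})=c(I)$ together with $\mathrm v_{\mathfrak p^{*}}(I)=\alpha(I)-\alpha(\mathfrak p^{*})$. The local part just proved then provides $\mathrm v_{\mathfrak p^{*}}(I^k)=k\alpha(I)-c(I)$, which gives $\mathrm v(I^k)\leq k\alpha(I)-c(I)$. Conversely, normal torsion-freeness with no embedded primes gives $\Ass(I^k)=\Ass(I)$, so $c(I^k)=c(I)$, and Lemma~\ref{low} yields $\mathrm v(I^k)\geq \alpha(I^k)-c(I^k)=k\alpha(I)-c(I)$. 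Equality follows, completing the proof.
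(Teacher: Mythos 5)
Your proof is correct and follows essentially the same route as the paper: take the witness $g$ for $\mathrm v_{\mathfrak p}(I)$, multiply by the $(k-1)$-st power of a minimal-degree element of $I$, get the easy inclusion $\mathfrak p\subseteq(I^k:f)$, and deduce the reverse inclusion from $\Ass(S/(I^k:f))\subseteq\Ass(S/I^k)=\Min(I)$ together with incomparability of minimal primes. You are in fact slightly more careful than the paper in checking that $f\notin I^k$ (so the colon ideal is proper) and that $c(I^k)=c(I)$ before invoking Lemma~\ref{low}.
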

\begin{proof}
    Let $\mathrm{v}_{\mathfrak{p}}(I)=\alpha(I)-\alpha(\mathfrak{p})$ for some $\mathfrak{p}\in\Ass(I)$. Then there exists a homogeneous polynomial $f\in S$ such that $(I:f)=\mathfrak{p}$ and $\deg (f)=\mathrm{v}_{\mathfrak{p}}(I)$. Now, let us consider $g=fa$ for some homogeneous element $a\in\mathfrak{p}$ with $\deg (a)=\alpha(\mathfrak{p})$. Then $g\in I$, which imply $\mathfrak{p}\subset (I^k:g^{k-1}f)$ as $\mathfrak{p}\subset (I:f)$. By the given condition, we have $\Ass(I^k)=\Ass(I)$. Since $\Ass(I^k:g^{k-1}f)\subset\Ass(I^k)$ and $I^k$ has no embedded prime, we have $(I^k:g^{k-1}f)=\mathfrak{p}$. This gives $\mathrm{v}_{\mathfrak{p}}(I^k)\leq \deg(g^{k-1}f)=\alpha(I)k-\alpha(\mathfrak{p})$. Therefore, using Lemma \ref{low}, we get $\mathrm v_{\mathfrak{p}}(I^k)=\alpha(I)k-\alpha(\mathfrak{p})$.\par
    
    Suppose $\mathrm{v}(I)=\mathrm{v}_{\mathfrak{p}}(I)=\alpha(I)-c(I)$ for some $\mathfrak{p}\in\Ass(I)$. Note that $\alpha(I)-c(I)\leq \alpha(I)-\alpha(\mathfrak{p})$. Again, Lemma \ref{low} implies that $\mathrm{v}_{\mathfrak{p}}(I)=\alpha(I)-c(I)\geq \alpha(I)-\alpha(\mathfrak{p})$. Thus, we have $c(I)=\alpha(\mathfrak{p})$, and hence, it follows from the first part that $\mathrm{v}_{\mathfrak{p}}(I^k)=\alpha(I)k-c(I)$. Therefore, by the definition of $\mathrm{v}$-number and Lemma \ref{low}, the result follows.
\end{proof}

\begin{proposition}\label{propsppv}
    Let $I\subset S$ be an equigenerated graded ideal with strong persistence property (equivalently, $(I^{k+1}:I)=I^k$ for all $k\geq 1$). If $\mathrm{v}_{\mathfrak{p}}(I)=\alpha(I)-\alpha(\mathfrak{p})$ for some $\mathfrak{p}\in\Ass(I)$, then $\mathrm{v}_{\mathfrak{p}}(I^k)=\alpha(I)k-\alpha(\mathfrak{p})$ for all $k\geq1$. In particular, if $\mathrm{v}(I)=\alpha(I)-c(I)$, then $\mathrm{v}(I^k)=\alpha(I)k-c(I)$ for all $k\geq 1$.
\end{proposition}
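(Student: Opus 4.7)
The plan is to mirror the proof of Proposition \ref{propntf}, replacing the normally-torsion-free hypothesis by strong persistence. From $\mathrm{v}_\mathfrak{p}(I)=\alpha(I)-\alpha(\mathfrak{p})$ one obtains a homogeneous $f\in S$ with $(I:f)=\mathfrak{p}$ and $\deg f=\alpha(I)-\alpha(\mathfrak{p})$; choose $a\in\mathfrak{p}$ of degree $\alpha(\mathfrak{p})$ and set $g=fa\in I$, which has degree $\alpha(I)$. The witness I would propose for $\mathrm{v}_\mathfrak{p}(I^k)$ is $F:=g^{k-1}f$, whose degree is exactly $\alpha(I)k-\alpha(\mathfrak{p})$. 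Proving $(I^k:F)=\mathfrak{p}$, combined with Lemma \ref{low}, then yields the claimed equality.

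The containment $\mathfrak{p}\subseteq(I^k:F)$ is immediate and uses neither equigeneratedness nor SPP: for $h\in\mathfrak{p}$ we have $hf\in I$, hence $hF=(hf)\,g^{k-1}\in I\cdot I^{k-1}=I^k$. For the reverse containment $(I^k:F)\subseteq\mathfrak{p}$, I would argue by induction on $k$. The base $k=1$ is the hypothesis. For the inductive step, assume $(I^{k-1}:g^{k-2}f)=\mathfrak{p}$ and take $h$ with $hg^{k-1}f\in I^k$; writing $hg^{k-1}f=g\cdot(hg^{k-2}f)$, the goal is to promote this to $hg^{k-2}f\in I^{k-1}$, after which the inductive hypothesis delivers $h\in\mathfrak{p}$. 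The promotion is to be effected by the strong persistence identity, iterated to $(I^k:I^{k-1})=I$: after rewriting the data as $(hf)\,g^{k-1}\in I^k$ with $g^{k-1}\in I^{k-1}$, one applies colon arithmetic to conclude $hf\in I$, which is exactly $h\in(I:f)=\mathfrak{p}$.

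The main obstacle is precisely this bridging step, because strong persistence gives $(I^k:I^{k-1})=I$ but not the a priori stronger identity $(I^k:g^{k-1})=I$; small examples (such as the triangle edge ideal, where $(I^2:xz)$ properly contains $I$) show the latter can genuinely fail. What should save the argument is that the proposed witness is $F=g^{k-1}f$ rather than $g^{k-1}$ alone, together with the specific factorisation $g=fa$: the outer colon by $f$, combined with equigeneratedness, is expected to collapse the possibly larger ideal $(I^k:g^{k-1})$ into $\mathfrak{p}$, because any element surviving colon by $f$ produces a new witness to an element of $(I:f)=\mathfrak{p}$. Once $(I^k:F)=\mathfrak{p}$ is secured, the in-particular clause is routine: if $\mathrm{v}(I)=\mathrm{v}_\mathfrak{p}(I)=\alpha(I)-c(I)$ for some $\mathfrak{p}\in\Ass(I)$, Lemma \ref{low} forces $c(I)=\alpha(\mathfrak{p})$, and the first part applied to this $\mathfrak{p}$ gives $\mathrm{v}(I^k)\leq\alpha(I)k-c(I)$, matching the Lemma \ref{low} lower bound.
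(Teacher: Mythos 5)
There is a genuine gap, and you have in fact put your finger on it yourself: your argument hinges on promoting $h g^{k-1} f \in I^k$ (equivalently $(hf)\,g^{k-1}\in I^k$) to $hf\in I$, which would require something like $(I^k : g^{k-1}) = I$, whereas strong persistence only gives $(I^k : I^{k-1}) = I$ --- a colon by the \emph{whole} ideal $I^{k-1}$, not by the single element $g^{k-1}$. The suggested rescue ("the outer colon by $f$, combined with equigeneratedness, is expected to collapse $(I^k:g^{k-1})$ into $\mathfrak{p}$") is not an argument: nothing in the hypotheses forces the specific witness $F=g^{k-1}f$ to satisfy $(I^k:F)=\mathfrak{p}$, and your proposal never establishes it. In Proposition \ref{propntf} this step is salvaged by the hypothesis $\Ass(I^k)=\Ass(I)$ with no embedded primes (so the ideal $(I^k:g^{k-1}f)\supseteq\mathfrak{p}$ has no room to be larger); under equigeneratedness plus strong persistence alone, no such control on $\Ass(I^k)$ is available, so insisting on this particular witness is the wrong move.

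The paper closes the gap by abandoning the explicit witness $g^{k-1}f$ altogether and using strong persistence at the level of ideals: writing $\{f_1,\dots,f_r\}$ for a minimal (equigenerated) generating set of $I^{k-1}$, one has
\begin{align*}
\mathfrak{p}=(I:f)=((I^{k}:I^{k-1}):f)=(I^{k}:fI^{k-1})=\bigcap_{i=1}^{r}(I^{k}:ff_i),
\end{align*}
and since $\mathfrak{p}$ is prime and each $(I^{k}:ff_i)$ contains it, primeness forces $(I^{k}:ff_i)=\mathfrak{p}$ for \emph{some} $i$. Equigeneratedness is then used exactly once, to guarantee $\deg(f_i)=(k-1)\alpha(I)$, so the witness $ff_i$ has degree $\alpha(I)k-\alpha(\mathfrak{p})$, and Lemma \ref{low} gives equality. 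Your treatment of the easy inclusion $\mathfrak{p}\subseteq(I^k:F)$ and of the "in particular" clause (via $c(I)=\alpha(\mathfrak{p})$ and Lemma \ref{low}) is fine, but as written the core reverse inclusion is unproved, so the proposal does not constitute a proof.
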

\begin{proof}
    We may assume $k\geq 2$. Let $f$ be a homogeneous element such that $(I:f)=\mathfrak{p}$ for some $\mathfrak{p}\in \Ass (I)$ and $\deg (f)=\mathrm v_{\mathfrak{p}}(I)=\alpha(I)-\alpha(\mathfrak{p})$. Since $I$ is equigenerated graded ideal, $I^{k-1}$ is so. Let $\{f_1,\ldots, f_r\}$ be a minimal homogenous equigenerated system of generators of $I^{k-1}$. By the given condition, we have 
    \begin{align*}
        \mathfrak{p}=(I:f)&=((I^{k}:I^{k-1}):f)
        \\&=(I^{k}:fI^{k-1})\\&=(I^{k}:\langle ff_1,\ldots,ff_r\rangle)
        \\&=\displaystyle\bigcap_{i=1}^{r}(I^{k}:ff_i).
    \end{align*}
    Therefore, $(I^k:ff_i)=\mathfrak{p}$ for some $1\leq i\leq r$. Since $\deg(f_i)=(k-1)\alpha(I)$, by the definition of $\mathrm{v}$-number, it follows that $\mathrm v_{\mathfrak{p}}(I^{k})\leq \deg(ff_i)=\alpha(I)k-\alpha(\mathfrak{p})$ for all $k\geq 2$. Applying Lemma \ref{low}, we get $\mathrm v_{\mathfrak{p}}(I^{k})=\alpha(I)k-\alpha(\mathfrak{p})$ for all $k\geq 1$.\par 

    Now, suppose $\mathrm{v}(I)=\alpha(I)-c(I)$. Let us choose $\mathfrak{p}\in\Ass(I)$ for which $\mathrm{v}(I)=\mathrm{v}_{\mathfrak{p}}(I)$. Note that $\mathrm{v}_{\mathfrak{p}}(I)=\alpha(I)-c(I)\leq \alpha(I)-\alpha(\mathfrak{p})$. Again, Lemma \ref{low} implies that $\mathrm{v}_{\mathfrak{p}}(I)\geq \alpha(I)-\alpha(\mathfrak{p})$. Thus, we have $c(I)=\alpha(\mathfrak{p})$, and hence, it follows from the first part that $\mathrm{v}_{\mathfrak{p}}(I^k)=\alpha(I)k-c(I)$. Therefore, by the definition of $\mathrm{v}$-number and Lemma \ref{low}, we get $\mathrm{v}(I^k)=\alpha(I)k-c(I)$ for all $k\geq 1$.
\end{proof}

Ficarra conjectured \cite[Conjecture 2.6]{ficarra} that if $I\subset S$ is a monomial ideal with linear powers, then $\mathrm{v}(I^k)=\alpha(I)k-1$. If a monomial ideal $I$ has linear powers, then the regularity of $I^k$ attains its minimum value for all $k\geq 1$, i.e., $\reg(I^k)=\alpha(I^k)=\alpha(I)k$. Thus, the $\mathrm{v}$-number may attain its minimum value whenever $I$ has linear powers. This was the motivation behind Ficarra's conjecture. Also, he showed that this conjecture is false when $I$ is not monomial. However, he actually provided a non-monomial graded ideal $I$ with linear powers for which $\mathrm{v}(I^k)\neq \alpha(I)k-1$. Due to our Lemma \ref{low}, we can observe that in \cite[Example 2.4]{ficarra}, the possible minimum value of $I^k$ is $\alpha(I)k-2$, which is attained. Hence, we generalize \cite[Conjecture 2.6]{ficarra} as follows:

\begin{conjecture}\label{conjlp}
Let $I\subset S$ be a graded ideal with linear powers. Then 
$$\mathrm{v}(I^k)=\alpha(I)k-c(I)~\text{for all}~k\geq 1,$$
where $c(I):=\max\{\alpha(\mathfrak{p})\mid \mathfrak{p}\in\Ass(I)\}$.
\end{conjecture}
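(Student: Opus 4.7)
The plan is to bound $\mathrm{v}(I^k)$ from both sides and match the two bounds. The lower bound follows from Lemma \ref{low} applied to $I^k$, which yields $\mathrm{v}(I^k) \geq \alpha(I^k) - c(I^k) = \alpha(I)k - c(I^k)$, so one half of the conjecture reduces to showing $c(I^k) \leq c(I)$ whenever $I$ has linear powers. For a normally torsion-free ideal this is immediate from $\Ass(I^k) = \Ass(I)$; in general my first move would be to check whether linear resolution of $I^k$ precludes the appearance of associated primes of initial degree exceeding $c(I)$. If a new embedded prime $\mathfrak{q} \in \Ass(I^k) \setminus \Ass(I)$ had $\alpha(\mathfrak{q}) > c(I)$, I would try to derive a contradiction from the tight constraint $\reg(S/I^k) = \alpha(I)k - 1$ on the shifts appearing in the minimal resolution.

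For the upper bound the cleanest route is through Proposition \ref{propsppv}: establish that any equigenerated ideal with linear powers has the strong persistence property, and then verify the base case $\mathrm{v}(I) = \alpha(I) - c(I)$ at $k = 1$. The proposition then propagates the equality to all $k \geq 1$. To obtain strong persistence, I would exploit that linear resolution of $I^{k+1}$ controls the linear strand of syzygies, which in combination with known properties of the Rees algebra in the equigenerated, linearly resolved case is often enough to force $(I^{k+1} : I) = I^k$. Alternatively, one can try to directly construct a witness: pick $\mathfrak{p} \in \Ass(I)$ with $\alpha(\mathfrak{p}) = c(I)$ and a homogeneous $f$ with $(I : f) = \mathfrak{p}$ of degree $\alpha(I) - c(I)$, then show that $f \cdot g^{k-1}$ for a suitable $g \in I$ of degree $\alpha(I)$ realizes $(I^k : f g^{k-1}) = \mathfrak{p}$ — exactly the template of Proposition \ref{propsppv}.

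The substantive content is the base case $\mathrm{v}(I) = \alpha(I) - c(I)$. Here I would exploit the linear resolution: since every first syzygy of $I$ has degree $\alpha(I) + 1$, for a generator $u$ of $I$ of degree $\alpha(I)$ and a linear form $\ell \in \mathfrak{p}$ of smallest degree contributing to some $\alpha(\mathfrak{p}) = c(I)$, the relation $\ell \cdot u \in I$ is forced to lie in the image of a linear syzygy, and tracing this back should yield an element $f$ of degree $\alpha(I) - c(I)$ with $\mathfrak{p} \subseteq (I : f)$; maximality of $\mathfrak{p}$ among associated primes of the corresponding degree then upgrades this to equality.

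The main obstacle, signalled by the authors only verifying special cases, is precisely the interplay of these three ingredients for an \emph{arbitrary} graded ideal: potential failure of normal torsion-freeness, possible failure of strong persistence, and the difficulty of extracting an explicit witness in the base case when $c(I) > 1$. My concrete plan, mirroring the paper's organization, would therefore be to first settle cases with extra structure (principal ideals, $\depth(S/I) = 0$, square-free monomial ideals where $c(I) = 1$ reduces the base case to $\mathrm{v}(I) = \alpha(I) - 1$), and to attempt the general graded case by a Gröbner degeneration to a monomial ideal that preserves $\alpha$ and $c$ while only upper semi-continuously affecting $\mathrm{v}$.
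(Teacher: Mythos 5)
This statement is Conjecture \ref{conjlp}: the paper does not prove it, and in fact the whole point of Sections \ref{secgradedconjonv} and \ref{secmonv} is that only special classes (principal ideals, ideals with $\depth(S/I)=0$, square-free monomial ideals with $\mathrm{v}(I)=\alpha(I)-1$, certain path/cover/weighted-oriented edge ideals, vertex splittable ideals) are settled. Your text is accordingly a research plan rather than a proof, and each of its load-bearing steps is left unestablished. First, the reduction via Proposition \ref{propsppv} requires the strong persistence property $(I^{k+1}:I)=I^k$; you assert that linear powers of an equigenerated ideal should force this via the Rees algebra, but you give no argument, and no such implication is proved in the paper or known in this generality. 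Second, even granting strong persistence, Proposition \ref{propsppv} only propagates the base case, and the base case $\mathrm{v}(I)=\alpha(I)-c(I)$ is precisely the hard content of the conjecture at $k=1$; your syzygy-tracing sketch is not a proof: if $c(I)>1$ the element $\ell\in\mathfrak{p}$ of degree $c(I)$ is not a linear form, the relation $\ell u\in I$ is not even guaranteed (it needs $\ell\in(I:u)$), and nothing in the linearity of the resolution produces an $f$ of degree $\alpha(I)-c(I)$ with $(I:f)$ \emph{equal} to $\mathfrak{p}$ rather than merely containing it.

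Third, the lower-bound half needs $c(I^k)\le c(I)$ (otherwise Lemma \ref{low} only gives $\mathrm{v}(I^k)\ge\alpha(I)k-c(I^k)$, which is weaker than claimed, and a new associated prime of large initial degree could in principle make $\mathrm{v}(I^k)$ drop below $\alpha(I)k-c(I)$); you only propose to ``check'' this, and deriving it from $\reg(S/I^k)=\alpha(I)k-1$ is exactly the kind of control over $\Ass(I^k)$ that is not available for general graded ideals with linear powers. Finally, the proposed Gr\"obner degeneration is not a viable fallback: passing to an initial ideal generally destroys the linear-powers hypothesis, changes $\Ass$ and hence $c$, and the asserted semicontinuity of $\mathrm{v}$ in the needed direction is not justified. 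So the proposal contains no complete argument for any case beyond what the paper already proves, and its general strategy has genuine gaps at every stage (strong persistence, the $k=1$ case, control of associated primes, and the degeneration step).
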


For the class of monomial ideals, Conjecture \ref{conjlp} is equivalent to \cite[Conjecture 2.6]{ficarra}. In \cite{ficarra}, the conjecture has been proved for some classes of monomial ideals: monomial ideals with linear powers having $\mathrm{depth}(S/I)=0$, edge ideals of graphs having linear powers, polymatroidal ideals, and Hibi ideals. In this paper, we prove Conjecture \ref{conjlp} for several classes of ideals, most of them are monomial ideals. First, we start by giving the following class of graded ideals, which may not be monomial, in support of our conjecture.

\begin{theorem}\label{thmprincipal}
    Let $I\subset S$ be a graded principal ideal. Then $I$ has linear powers and $\mathrm{v}_{\mathfrak{p}}(I^k)=\alpha(I)k-\alpha(\mathfrak{p})$ for all $\mathfrak{p}\in\Ass(I^k)$ and for all $k\geq 1$. In particular, we have
    $$\mathrm{v}(I^k)=\alpha(I)k-c(I)~\text{for all}~k\geq 1.$$
\end{theorem}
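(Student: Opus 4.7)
The plan is to exploit the fact that $S$ is a unique factorization domain, which makes principal ideals especially tractable: both the resolution and the primary decomposition of $\langle f^k\rangle$ can be written down explicitly. Write $I=\langle f\rangle$ for some nonzero homogeneous $f\in S$, so that $\alpha(I)=\deg f$, and $I^k=\langle f^k\rangle$ for all $k\geq 1$. Linear powers are immediate: the minimal graded free resolution of $\langle f^k\rangle$ is $0\to S(-k\deg f)\to \langle f^k\rangle\to 0$, so $\reg(I^k)=k\deg f=\alpha(I^k)$.

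Next, I would pin down $\Ass(I^k)$. Factor $f=f_1^{a_1}\cdots f_r^{a_r}$ into distinct homogeneous irreducibles in the UFD $S$. Since $f^k$ is a nonzerodivisor, $S/\langle f^k\rangle$ is Cohen--Macaulay and hence has no embedded primes, so
$$\Ass(I^k)=\{\langle f_i\rangle : 1\leq i\leq r\},$$
independent of $k$. In particular, $c(I)=\max_i\deg f_i$ and $\Ass(I^k)=\Ass(I)$.

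Now fix $\mathfrak{p}=\langle f_i\rangle\in\Ass(I^k)$ with $\alpha(\mathfrak{p})=\deg f_i$. Set $g:=f^k/f_i\in S$, a homogeneous polynomial of degree $k\deg f-\deg f_i=\alpha(I)k-\alpha(\mathfrak{p})$. Clearly $g\cdot f_i=f^k\in I^k$, so $f_i\in(I^k:g)$. Conversely, if $h\in S$ satisfies $gh\in\langle f^k\rangle$, then $f^k\mid (f^k/f_i)h$, which forces $f_i\mid h$. Therefore $(I^k:g)=\langle f_i\rangle=\mathfrak{p}$, giving the upper bound
$$\mathrm{v}_{\mathfrak{p}}(I^k)\leq\alpha(I)k-\alpha(\mathfrak{p}).$$
Combined with the lower bound $\mathrm{v}_{\mathfrak{p}}(I^k)\geq\alpha(I^k)-\alpha(\mathfrak{p})=\alpha(I)k-\alpha(\mathfrak{p})$ from Lemma \ref{low} (applied to $I^k$), we obtain equality for every $\mathfrak{p}\in\Ass(I^k)$.

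Finally, taking the minimum over $\Ass(I^k)=\Ass(I)$ yields
$$\mathrm{v}(I^k)=\min_{\mathfrak{p}\in\Ass(I)}\bigl(\alpha(I)k-\alpha(\mathfrak{p})\bigr)=\alpha(I)k-c(I),$$
as desired. There is no real obstacle here; the whole argument rides on the explicit witness $g=f^k/f_i$, which only works because $S$ is a UFD and we are in the principal case. (Alternatively, once one has checked $\Ass(I^k)=\Ass(I)$ for all $k$ and the $k=1$ case of the formula, one could invoke Proposition \ref{propntf}, since normally torsion-free with no embedded primes is equivalent to the constancy of the associated primes along powers.)
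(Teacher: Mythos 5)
Your proof is correct, and its core is the same as the paper's: the explicit colon witness $f^k/f_i$ (degree $\alpha(I)k-\alpha(\mathfrak p)$) giving the upper bound, matched against the lower bound of Lemma \ref{low}. The packaging differs slightly. The paper only verifies the $k=1$ computation $(I:f/f_i)=\langle f_i\rangle$, records the primary decomposition $I=\mathfrak p_1^{a_1}\cap\cdots\cap\mathfrak p_r^{a_r}$ to get $\Ass(I)$ and $\Ass(I^k)=\Ass(I)$, and then lifts to all powers by noting $(I^{k+1}:I)=I^k$ and invoking Proposition \ref{propsppv}; you instead compute $(I^k:f^k/f_i)=\langle f_i\rangle$ directly for every $k$ (a one-line divisibility argument in the UFD), which makes the proof self-contained and bypasses Proposition \ref{propsppv} -- note that the witness produced by that proposition is in any case $f^{k-1}\cdot(f/f_i)=f^k/f_i$, so the two arguments produce the same element. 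Two further points in your favour: you actually prove the ``linear powers'' assertion of the statement (resolution $0\to S(-k\deg f)\to I^k\to 0$), which the paper's proof leaves implicit, and your identification of $\Ass(I^k)$ via Cohen--Macaulayness/unmixedness of the hypersurface $S/\langle f^k\rangle$ is a valid alternative to the paper's explicit primary decomposition (which one could equally apply to $f^k$ itself). Your parenthetical alternative route through Proposition \ref{propntf} would also work once $\Ass(I^k)=\Ass(I)$ and the $k=1$ case are in hand.
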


\begin{proof}
    Let $I=\langle f \rangle$, where $f$ is a homogeneous polynomial in $S$. We can write $f=f_{1}^{a_1}\ldots f_{r}^{a_r}$ for some irreducible polynomials $f_1,\ldots,f_r$ in $S$, where $f_i\neq f_j$ for $i\neq j$ and $a_1,\ldots, a_r$ are positive integers. Then $\mathfrak{p}_{i}:=\langle f_i\rangle$ is a prime ideal for each $1\leq i\leq r$. Note that $I=\mathfrak{p}_{1}^{a_1}\cdots \mathfrak{p}_{r}^{a_r}=\mathfrak{p}_{1}^{a_1}\cap \cdots \cap \mathfrak{p}_{r}^{a_r}$ is the primary decomposition of $I$, and thus, $\Ass(I)=\{\mathfrak{p}_{1},\ldots,\mathfrak{p}_{r}\}$. It is clear that $I$ has no embedded prime, and hence, $(I:\frac{f}{f_i})=\mathfrak{p}_{i}$ as $\frac{f}{f_i}\not\in I$. Therefore, $\mathrm{v}_{\mathfrak{p}_{i}}(I)\leq \deg(\frac{f}{f_i})=\deg(f)-\deg(f_i)=\alpha(I)-\alpha(\mathfrak{p}_{i})$. By Lemma \ref{low}, we get $\mathrm{v}_{\mathfrak{p}_i}(I)=\alpha(I)-\alpha(\mathfrak{p}_i)$. It is a routine check that $(I^{k+1}:I)=I^k$ for all $k\geq 1$. Hence, it follows from Proposition \ref{propsppv} that $\mathrm{v}_{\mathfrak{p}_i}(I^k)=\alpha(I)k-\alpha(\mathfrak{p}_i)$. Since $I$ is a principal ideal, $\Ass(I^k)=\Ass(I)$ for all $k\geq 1$, and the result follows.\par 

    By the definition of $\mathrm{v}$-number and the first part, $\mathrm{v}(I^k)=\alpha(I)k-c(I)~\text{for all}~k\geq 1$.
\end{proof}

\begin{theorem}\label{thmdepth0}
    Let $I\subset S$ be a graded ideal whose all associated primes are generated by linear forms. If $I$ has linear powers and $\depth(S/I)=0$, then 
    $$\mathrm{v}(I^k)=\alpha(I)k-1~\text{for all}~k\geq 1.$$
\end{theorem}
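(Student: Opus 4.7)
Write $d := \alpha(I)$; by hypothesis $I^k$ has a linear resolution of degree $dk$, so $\reg(S/I^k) = dk - 1$ for every $k \geq 1$. The plan is to sandwich $\mathrm{v}(I^k)$ between the lower bound $dk - 1$ coming from Lemma \ref{low} and a matching upper bound obtained by exhibiting an explicit socle element of $S/I^k$.

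For the upper bound, the hypothesis $\depth(S/I) = 0$ places $\mathfrak{m} \in \Ass(I)$, producing a homogeneous $f \in S \setminus I$ with $\mathfrak{m} f \subseteq I$. Since the socle $\operatorname{soc}(S/I) \subseteq H^{0}_{\mathfrak{m}}(S/I)$ has top degree at most $\reg(S/I) = d - 1$, we get $\deg f \leq d - 1$; Lemma \ref{low} supplies the reverse inequality (because $\alpha(\mathfrak{m}) = 1$), forcing $\deg f = d - 1$. For each $k \geq 1$ I would then pick any $v \in I$ with $\deg v = d$ (available because the linear resolution forces $I$ to be equigenerated in degree $d$) and set $g_k := f v^{k-1}$, so $\deg g_k = dk - 1 < dk = \alpha(I^k)$. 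Because $I^k$ is generated in degree $dk$, every nonzero homogeneous element of $I^k$ has degree $\geq dk$, so $g_k \notin I^k$. On the other hand, for each variable $x_i$ we have $x_i g_k = (x_i f)\cdot v^{k-1} \in I \cdot I^{k-1} = I^k$, placing $\mathfrak{m}$ inside the proper ideal $(I^k : g_k)$; by maximality of $\mathfrak{m}$ one obtains $(I^k : g_k) = \mathfrak{m}$. Hence $\mathfrak{m} \in \Ass(I^k)$ and $\mathrm{v}(I^k) \leq \mathrm{v}_{\mathfrak{m}}(I^k) \leq dk - 1$.

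For the matching lower bound, Lemma \ref{low} gives $\mathrm{v}(I^k) \geq \alpha(I^k) - c(I^k) = dk - c(I^k)$, so it suffices to verify $c(I^k) = 1$, i.e.\ that every associated prime of $I^k$ is generated by linear forms. This is automatic in the monomial case, because every associated prime of a monomial ideal is generated by a subset of the variables, and this is the principal setting of interest for Conjecture \ref{conjlp}. In the fully general graded setting, propagating the linearly-generated property from $\Ass(I)$ to $\Ass(I^k)$ is the main obstacle I foresee; the natural route is to combine the hypothesis $c(I) = 1$ with linear powers, arguing that any $\mathfrak{p} \in \Ass(I^k)$ must lie over some $\mathfrak{q} \in \Ass(I)$ whose linearly generating forms force $\alpha(\mathfrak{p}) \leq 1$. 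Once $c(I^k) = 1$ is secured, combining the two bounds yields $\mathrm{v}(I^k) = \alpha(I)k - 1$ for every $k \geq 1$, as desired.
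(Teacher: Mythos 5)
Your upper bound is correct and takes a genuinely different route from the paper. The paper obtains $\mathrm{v}(I^k)\le\reg(S/I^k)=\alpha(I)k-1$ by citing two external results: the non-increasing behaviour of $\depth(S/I^k)$ for ideals with linear powers (\cite[Proposition 10.3.4]{Hibi}) to get $\depth(S/I^k)=0$ for all $k$, and then \cite[Proposition 1.6]{ghosh} to bound the $\mathrm{v}$-number by the regularity in the depth-zero case. Your explicit witness does the same work by hand: a homogeneous socle element $f$ of $S/I$ has degree at most $\reg(S/I)=\alpha(I)-1$ (and exactly that, by the Lemma \ref{low} argument), and $(I^k:fv^{k-1})=\mathfrak{m}$ because $\mathfrak{m}fv^{k-1}\subseteq I\cdot I^{k-1}=I^k$ while $\deg(fv^{k-1})=\alpha(I)k-1<\alpha(I^k)$ forces $fv^{k-1}\notin I^k$. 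This is self-contained, and it even shows $\mathfrak{m}\in\Ass(I^k)$ for every $k$ directly, bypassing the depth-monotonicity citation altogether; what it costs is invoking the standard facts that a linear resolution forces $I$ to be equigenerated and that the top degree of $H^0_{\mathfrak{m}}(S/I)$ is bounded by $\reg(S/I)$.

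The lower bound is where you leave a genuine gap, and you also overestimate its difficulty. First, $c(I^k)=1$ does not mean that every associated prime of $I^k$ is generated by linear forms: since $c$ is defined through $\alpha(\mathfrak{p})$, it only requires every $\mathfrak{p}\in\Ass(I^k)$ to contain a nonzero linear form. Second, this weaker statement is immediate and needs neither the monomial assumption nor the linear-powers hypothesis, so there is nothing to ``propagate'': any $\mathfrak{p}\in\Ass(I^k)$ contains $I^k$, hence (being prime) contains $I$, hence contains a minimal prime $\mathfrak{q}$ of $I$; minimal primes are associated primes, so by hypothesis $\mathfrak{q}$ is generated by linear forms, and any one of those linear forms lies in $\mathfrak{p}$, giving $\alpha(\mathfrak{p})=1$ and thus $c(I^k)=1$. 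This is exactly the paper's one-line argument, and with it Lemma \ref{low} yields $\mathrm{v}(I^k)\ge\alpha(I)k-1$, closing your proof. As written, however, the general graded case of your lower bound is only asserted as a ``natural route,'' so the argument is incomplete until you insert this step.
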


\begin{proof}
    Since $I$ has linear powers, $\depth(S/I^k)$ is a non-increasing function of $k$ by \cite[Proposition 10.3.4]{Hibi}. Therefore, $\depth(S/I^k)=0$ for all $k\geq 1$ as $\depth(S/I)=0$ is given. Now, $I^k$ has a linear resolution for all $k\geq 1$ imply $\reg(S/I^k)=\alpha(I^k)-1=\alpha(I)k-1$ for all $k\geq 1$. Hence, by \cite[Proposition 1.6]{ghosh}, we have $\mathrm{v}(I^k)\leq \reg(S/I^k)=\alpha(I)k-1$ for all $k\geq 1$. Let $\mathfrak{p}\in \Ass(I^k)$. Then either $\mathfrak{p}$ is a minimal prime of $I$ or $\mathfrak{p}$ contains a minimal prime of $I$. This gives $\alpha(\mathfrak{p})=1$ for all $\mathfrak{p}\in\Ass(I^k)$, and thus, $c(I^k)=1$ for all $k\geq 1$. Hence, using Lemma \ref{low}, we get $\mathrm{v}(I^k)=\alpha(I)k-1$ for all $k\geq 1$.
\end{proof}

\begin{corollary}
    Let $I\subset S$ be a $\mathfrak{m}$-primary ideal, where $\mathfrak{m}$ is the unique homogeneous maximal ideal of $S$. If $I$ has linear powers, then $\mathrm{v}(I^k)=\alpha(I)k-1$ for all $k\geq 1$.
\end{corollary}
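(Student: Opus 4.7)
The plan is to deduce this corollary directly from Theorem \ref{thmdepth0}; the two hypotheses of that theorem are essentially built into the assumption that $I$ is $\mathfrak{m}$-primary.

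First I would verify the associated-prime condition. Since $I$ is $\mathfrak{m}$-primary, $\Ass(S/I) = \{\mathfrak{m}\}$, and for any power $I^k$ one still has $\sqrt{I^k}=\mathfrak{m}$, so $\Ass(S/I^k) = \{\mathfrak{m}\}$ as well. In particular every associated prime of $I$ is the unique homogeneous maximal ideal $\mathfrak{m} = \langle x_1,\ldots,x_n\rangle$, which is generated by linear forms.

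Next I would observe that $\depth(S/I) = 0$. This follows immediately because $\mathfrak{m} \in \Ass(S/I)$: every homogeneous element of $\mathfrak{m}$ is a zerodivisor modulo $I$, so no positive-depth regular sequence exists on $S/I$. (Equivalently, $S/I$ is a finite-dimensional $K$-vector space, hence Artinian of depth $0$.)

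Having checked both preconditions, I would invoke Theorem \ref{thmdepth0} together with the assumption that $I$ has linear powers to conclude that
\[
\mathrm{v}(I^k) = \alpha(I)k - 1 \quad \text{for all } k \geq 1.
\]
There is no real obstacle here; the only subtlety worth flagging is that one should not appeal to Lemma \ref{low} through $c(I^k)$ directly without first noting that $\mathfrak{m}$-primariness is preserved under taking powers, which is what guarantees $c(I^k)=1$ for every $k$.
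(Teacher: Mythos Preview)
Your proposal is correct and follows essentially the same route as the paper: the paper simply notes that $\mathfrak{m}$-primariness gives $\depth(S/I)=0$ and then applies Theorem~\ref{thmdepth0}. You have merely spelled out the verification of the linear-forms hypothesis and the depth-zero condition in more detail than the paper does; your remarks about $\Ass(S/I^k)=\{\mathfrak{m}\}$ and $c(I^k)=1$ are already absorbed into the proof of Theorem~\ref{thmdepth0} and need not be repeated here.
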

\begin{proof}
    Since $I$ is $\mathfrak{m}$-primary, $\depth(S/I)=0$. Thus, the result follows by Theorem \ref{thmdepth0}.
\end{proof}

\section{The $\mathrm{v}$-number of powers of monomial ideals}\label{secmonv}

In this section, we reduce Conjecture \ref{conjlp} for several classes of monomial ideals. We provide some classes of monomial ideals in support of Conjecture \ref{conjlp}. Also, we explicitly find the $\mathrm{v}$-numbers of powers of vertex splittable ideals.
  
\begin{theorem}\label{thmsqfree}
    Let $I\subset S$ be a square-free monomial ideal and $\mathrm v(I)=\alpha(I)-1$. Then $\mathrm v(I^{k})=\alpha(I)k-1$ for all $k\geq 1$. 
\end{theorem}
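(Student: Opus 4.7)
The plan is to combine the lower bound from Lemma \ref{low} with an explicit construction of a colon witness for $I^k$. Since $I^k$ is a monomial ideal, every $\mathfrak{p} \in \Ass(I^k)$ is generated by variables, so $c(I^k) = 1$ and Lemma \ref{low} yields $\mathrm{v}(I^k) \geq \alpha(I)k - 1$. It therefore suffices to produce a monomial $g$ of degree $\alpha(I)k - 1$ and some $\mathfrak{p} \in \Ass(I^k)$ with $(I^k : g) = \mathfrak{p}$.

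To build this witness, I would start from a monomial $f$ with $\deg(f) = \alpha(I) - 1$ and $(I:f) = \mathfrak{p} = \langle x_{i_1}, \ldots, x_{i_s}\rangle \in \Ass(I)$, which exists by hypothesis. The first structural observation I would record is that for every $j$, any minimal generator of $I$ dividing $fx_{i_j} \in I$ has degree $\geq \alpha(I) = \deg(fx_{i_j})$, and so must equal $fx_{i_j}$; thus $fx_{i_j} \in \mathcal{G}(I)$, and being square-free, this forces $f$ itself to be square-free with $x_{i_j} \nmid f$ for every $j$. I would then propose
\[
g := f\,(fx_{i_1})^{k-1} = f^k x_{i_1}^{k-1},
\]
which has degree $\alpha(I)k - 1$, and verify $(I^k:g) = \mathfrak{p}$. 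The containment $\mathfrak{p} \subseteq (I^k:g)$ is immediate: for each $j$, the identity $x_{i_j}g = (fx_{i_j})(fx_{i_1})^{k-1}$ exhibits $x_{i_j}g$ as a product of $k$ elements of $\mathcal{G}(I)$, hence in $I^k$.

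The reverse containment $(I^k : g) \subseteq \mathfrak{p}$ is the technical heart of the argument and I expect it to be the main obstacle. The plan is to assume $hg \in I^k$ for a monomial $h \notin \mathfrak{p}$ and derive a contradiction. Because $h$ avoids every $x_{i_j}$ by assumption and $f$ avoids them by the structural observation above, the monomial $hg = hf^k x_{i_1}^{k-1}$ contains no $x_{i_j}$ for $j \ne 1$ and contains $x_{i_1}$ to multiplicity exactly $k-1$. Pick minimal generators $u_{j_1}, \ldots, u_{j_k} \in \mathcal{G}(I)$ whose product divides $hg$. By square-freeness each $u_{j_l}$ is disjoint from $\{x_{i_j} : j \ne 1\}$, and at most $k-1$ of them can contain $x_{i_1}$; hence some $u^*$ among them is disjoint from the entire set $\{x_{i_1}, \ldots, x_{i_s}\}$. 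Then $u^*/\gcd(u^*, f) \in (I:f) = \mathfrak{p}$ is a non-unit monomial (it is non-unit because $\deg(u^*) \geq \alpha(I) > \deg(f)$, so $u^* \nmid f$), hence it must be divisible by some $x_{i_j}$ — but as a divisor of $u^*$, this would force $x_{i_j} \mid u^*$, contradicting the choice of $u^*$. This contradiction gives $h \in \mathfrak{p}$, establishing the equality $(I^k : g) = \mathfrak{p}$ and finishing the proof.
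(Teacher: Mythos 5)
Your proposal is correct and follows essentially the same route as the paper: both use the lower bound from Lemma \ref{low} and the same witness monomial $f^k x_{i_1}^{k-1}$ (the paper's $g^{k-1}f$ with $g=fx_1$), with the reverse containment proved by the same counting on the exponent of $x_{i_1}$ and on the absence of the other variables of $\mathfrak{p}$. The only cosmetic differences are that you justify the square-freeness of $f$ (via $fx_{i_j}\in\mathcal{G}(I)$) where the paper simply asserts a square-free witness, and you phrase the reverse inclusion contrapositively rather than by the paper's two-case analysis.
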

\begin{proof}
    Since $I$ is a square-free monomial ideal and $\mathrm v(I)=\alpha(I)-1$, there exists a square-free monomial $f\in S$ with $\deg(f)=\alpha(I)-1$ such that $(I:f)=\mathfrak{p}$ for some $\mathfrak{p}\in \Ass(I)$. Without loss of generality, let us assume $\mathfrak{p}=\langle x_1,\ldots, x_t\rangle$. Then, it is clear that $x_i\nmid f$ for each $1\leq i\leq t$. Let us consider $g=fx_1\in I$. Then $x_1^{k-1}~|~g^{k-1}f$, but $x_1^{k}\nmid g^{k-1}f$. Since $(I:f)=\mathfrak{p}$, $x_ig^{k-1}f=g^{k-1}fx_i\in I^k$ for all $1\leq i\leq t$. This implies that $\mathfrak{p}\subseteq (I^k:g^{k-1}f)$. To show the reverse inequality, let $h=\frac{h_1\ldots h_k}{\mathrm{gcd}(h_1\ldots h_k,~g^{k-1}f)}\in \mathcal{G}(I^k:g^{k-1}f)$, where $h_1,\ldots, h_k\in \mathcal{G}(I)$. Since $\{x_1,\ldots,x_t\}$ is a minimal prime of $I$, for each $h_i$, there exists $j_{i}\in\{1,\ldots,t\}$ such that $x_{j_{i}}\mid h_i$. Therefore, $\prod_{i=1}^{k}h_{i}$ is divisible by $\prod_{i=1}^{t}x_{i}^{c_i}$, where $\sum_{i=1}^{t}c_i=k$. Now, we distinguish the following two possible cases:
    \begin{enumerate}[(i)]
    \item Suppose $c_1=k$. Then $c_2=\ldots=c_t=0$. In this situation, observe that $x_{1}^{k-1}\mid \mathrm{gcd}(h_1\ldots h_k,~g^{k-1}f)$, but $x_{1}^{k}\nmid \mathrm{gcd}(h_1\ldots h_k,~g^{k-1}f)$. Therefore, $h\in\langle x_1\rangle\subseteq \mathfrak{p}$.
    \item If $c_1<k$, then $c_i\neq 0$ for some $2\leq i\leq t$. Since $x_i\nmid g^{k-1}f=f^{k}x_{1}^{k-1}$ for all $2\leq i\leq t$, $x_i\nmid \mathrm{gcd}(h_1\ldots h_k,~g^{k-1}f)$. Thus, $h\in \langle x_i\rangle\subseteq \mathfrak{p}$, where $c_i\neq 0$ for some $2\leq i\leq t$.
    \end{enumerate}
    Hence, $(I^k:g^{k-1}f)=\mathfrak{p}$, which gives $\mathrm v(I^k)\leq \deg(g^{k-1}f)=k\alpha(I)-1$. By Lemma \ref{low}, it follows that $ k\alpha(I)-1\leq \mathrm v(I^k)$. Therefore, $\mathrm v(I^k)=\alpha(I)k-1$.
\end{proof}

In general, $\mathrm v(I^k)$, where $I$ is a square-free monomial ideal, is not linear for all $k\geq 1$, but if $\mathrm v(I)=\alpha(I)-1$, then $\mathrm v(I^k)$ is linear from very beginning.

\begin{corollary}\label{corcover}
    If the cover ideal $J(G)$ of a graph $G$ has a linear resolution (equivalently, $I(G)$ is Cohen-Macaulay), then $\mathrm v(J(G)^k)=\alpha(J(G))k-1$.
\end{corollary}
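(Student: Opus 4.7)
The plan is to reduce this corollary directly to Theorem \ref{thmsqfree}. Since the cover ideal $J(G)$ is square-free, it suffices to verify that $\mathrm{v}(J(G)) = \alpha(J(G)) - 1$; then Theorem \ref{thmsqfree} immediately yields $\mathrm{v}(J(G)^k) = \alpha(J(G))k - 1$ for all $k \geq 1$. The equivalence of the hypotheses ``$J(G)$ has a linear resolution'' and ``$I(G)$ is Cohen--Macaulay'' is the classical Eagon--Reiner theorem applied to the Alexander dual pair $(I(G), J(G))$, so we may freely use that $J(G)$ has a linear resolution.

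To establish $\mathrm{v}(J(G)) = \alpha(J(G)) - 1$, I would argue by a two-sided bound. For the lower bound, since $J(G)$ is a square-free monomial ideal, every associated prime of $J(G)$ is generated by a subset of the variables, so $\alpha(\mathfrak{p}) = 1$ for all $\mathfrak{p} \in \Ass(J(G))$. Hence $c(J(G)) = 1$, and Lemma \ref{low} gives
\[
\mathrm{v}(J(G)) \geq \alpha(J(G)) - c(J(G)) = \alpha(J(G)) - 1.
\]
For the upper bound, the linear resolution hypothesis forces $\reg(S/J(G)) = \alpha(J(G)) - 1$, and by the general inequality $\mathrm{v}(I) \leq \reg(S/I)$ valid for square-free monomial ideals (for instance, \cite[Proposition 1.6]{ghosh}, applied in the proof of Theorem \ref{thmdepth0} above), we conclude
\[
\mathrm{v}(J(G)) \leq \reg(S/J(G)) = \alpha(J(G)) - 1.
\]

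Combining the two inequalities gives the desired equality $\mathrm{v}(J(G)) = \alpha(J(G)) - 1$. Since $J(G)$ is square-free monomial, Theorem \ref{thmsqfree} then applies and yields $\mathrm{v}(J(G)^k) = \alpha(J(G))k - 1$ for every $k \geq 1$, proving the corollary.

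There is no real obstacle here: all nontrivial work has been front-loaded into Lemma \ref{low} and Theorem \ref{thmsqfree}. The only ingredient beyond those is the well-known comparison between the v-number and the regularity for square-free monomial ideals, together with the standard fact that a linear resolution of an equigenerated ideal $I$ forces $\reg(S/I) = \alpha(I) - 1$.
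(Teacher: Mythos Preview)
Your overall strategy matches the paper's: reduce to Theorem \ref{thmsqfree} by verifying $\mathrm{v}(J(G))=\alpha(J(G))-1$. The paper obtains that equality by citing \cite[Corollary 3.9]{kamalesh}, which is a result specific to cover ideals.

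However, your upper-bound argument contains a genuine gap. You invoke ``the general inequality $\mathrm{v}(I)\leq \reg(S/I)$ valid for square-free monomial ideals,'' but this inequality is \emph{false} in general: as the introduction of the paper recalls, Civan \cite{civan} showed that $\mathrm{v}(I(G))$ can be arbitrarily larger than $\reg(S/I(G))$ for edge ideals, which are square-free monomial. Your citation of \cite[Proposition 1.6]{ghosh} does not help either: in the proof of Theorem \ref{thmdepth0} that reference is applied only after establishing $\depth(S/I^k)=0$, a hypothesis you do not have for $J(G)$. So the step $\mathrm{v}(J(G))\leq \reg(S/J(G))$ is unjustified as written.

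The fix is precisely what the paper does: the inequality $\mathrm{v}(J(G))\leq \reg(S/J(G))$ is a theorem about cover ideals specifically, proved in \cite{kamalesh}, and not a consequence of square-freeness or linear resolution alone. Once you cite that, your argument and the paper's coincide.
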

\begin{proof}
    From \cite[Corollary 3.9]{kamalesh}, it follows that if $J(G)$ has a linear resolution, then $\mathrm v(J(G))=\alpha(J(G))-1$. Since $J(G)$ is a square-free monomial ideal, Theorem \ref{thmsqfree} gives $\mathrm v(J(G)^k)=\alpha(J(G))k-1.$
\end{proof}

\begin{definition}
   The $t$-path ideal
$I_t(G)$ associated to $G$ is the square-free monomial ideal
$$I_t(G)=\langle x_{i_1}\ldots x_{i_t}~|~\{x_{i_1},\ldots,x_{i_t}\}~\mbox{is a $t$-path of G}\rangle$$

\end{definition}

\begin{corollary}\label{cort-path}
    Let $I\subset S$ be a $t$-path ideal of $P_n$. If $I$ has a linear resolution (equivalently, $I$ has linear powers), then $\mathrm v(I^k)=\alpha(I)k-1$ for all $k\geq1$. 
\end{corollary}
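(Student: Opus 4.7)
The plan is to reduce this directly to Theorem \ref{thmsqfree}. Since $I = I_t(P_n)$ is a square-free monomial ideal equigenerated in degree $t = \alpha(I)$, Theorem \ref{thmsqfree} tells us that it is enough to establish the single equality $\mathrm{v}(I) = t-1$; the conclusion $\mathrm{v}(I^k) = \alpha(I)k - 1$ for all $k \geq 1$ then follows automatically.

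For the lower bound $\mathrm{v}(I) \geq t - 1$, I would observe that every associated prime of the monomial ideal $I$ is generated by linear forms, so Lemma \ref{low} directly yields $\mathrm{v}(I) \geq \alpha(I) - 1 = t - 1$. For the upper bound, I would invoke the linear-resolution hypothesis: since $I$ is equigenerated in degree $t$ with a linear resolution, $\reg(I) = t$ and hence $\reg(S/I) = t - 1$. Combining this with the general inequality $\mathrm{v}(I) \leq \reg(S/I)$ from \cite[Proposition 1.6]{ghosh} (the same inequality invoked in the proof of Theorem \ref{thmdepth0}) gives $\mathrm{v}(I) \leq t - 1$.

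Putting the two bounds together yields $\mathrm{v}(I) = t - 1 = \alpha(I) - 1$, at which point Theorem \ref{thmsqfree} completes the proof. I do not foresee any substantial obstacle: the argument is a clean application of two earlier results (Lemma \ref{low} and Theorem \ref{thmsqfree}) together with the standard inequality comparing $\mathrm{v}$-number and regularity. One minor subtlety worth flagging is that the corollary posits the stronger hypothesis of linear powers, whereas the reduction via Theorem \ref{thmsqfree} only uses the linear-resolution hypothesis at $k = 1$; the parenthetical equivalence ``linear resolution iff linear powers'' for $t$-path ideals of $P_n$ is a separately cited structural fact and plays no role in the proof itself. If one preferred a self-contained argument, the upper bound could alternatively be obtained combinatorially by exhibiting, for instance, $f = x_2 x_3 \cdots x_t$ and verifying that $(I : f) = \langle x_1, x_{t+1}\rangle$ whenever such a prime is associated to $I$, but the regularity route above is shorter.
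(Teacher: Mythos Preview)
Your primary route has a genuine gap. The inequality $\mathrm{v}(I)\leq\reg(S/I)$ is \emph{not} valid in general: the introduction of this very paper recalls Civan's result that $\mathrm{v}(I(G))$ can be arbitrarily larger than $\reg(S/I(G))$. In the proof of Theorem~\ref{thmdepth0} the citation \cite[Proposition~1.6]{ghosh} is invoked only after establishing $\depth(S/I^k)=0$; that depth hypothesis is essential to the inequality, and it is \emph{not} satisfied by $t$-path ideals of $P_n$. For instance, with $t=2$ and $n=3$ one has $I=\langle x_1x_2,x_2x_3\rangle=\langle x_2\rangle\cap\langle x_1,x_3\rangle$, so $\mathfrak{m}\notin\Ass(I)$ and $\depth(S/I)>0$. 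Hence your ``regularity route'' for the upper bound does not go through.

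Your fallback, however, is exactly the paper's argument and is correct. The paper takes $f=x_2\cdots x_t$ and checks directly that $(I:f)=\langle x_1,x_{t+1}\rangle$ when $t\leq n\leq 2t$ (the linear-resolution range, by \cite{shan}); this gives $\mathrm{v}(I)\leq t-1$, and Lemma~\ref{low} plus Theorem~\ref{thmsqfree} finish the proof just as you outlined. So the overall structure you proposed---reduce to $\mathrm{v}(I)=\alpha(I)-1$ via Theorem~\ref{thmsqfree}---is right; you simply need the combinatorial colon computation rather than the regularity bound for the upper inequality.
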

\begin{proof}
    Since $I=I_{t}(P_n)$, we have $I=\langle x_1x_2\ldots x_t, x_2\ldots x_{t+1},\ldots,x_{n-t+1}\ldots x_n\rangle$. Then it is clear that $(I:x_2\ldots x_t)=\langle x_1,x_{t+1}\rangle$ for $t\leq n\leq 2t$. Thus, $\mathrm v(I)\leq t-1=\alpha(I)-1.$ By Lemma \ref{low}, $\mathrm v(I)=\alpha(I)-1$. Therefore, using Theorem \ref{thmsqfree}, we get $\mathrm v(I^k)=\alpha(I)k-1$ for all $k\geq 1$ and $t\leq n\leq 2t$. Hence, the proof follows from the fact that $I$ has a linear resolution if and only if $t\leq n\leq 2t$ by \cite[Theorem 2.2]{shan}.
\end{proof}
\begin{remark}
    Let $I=I_{t}(P_n)$. It is known by \cite[Theorem 2.2]{shan} that $I^k$ has a linear resolution for some $k\geq 1$ $\Longleftrightarrow$ $I$ has linear powers $\Longleftrightarrow$ $t\leq n\leq 2t$ $\Longleftrightarrow$ $I^k$ has linear quotient for some $k\geq 1$ $\Longleftrightarrow$ $I^k$ has linear quotient for all $k\geq 1$. Due to Corollary \ref{cort-path}, this gives a class of monomial ideals, which satisfies \cite[Conjecture 2.6]{ficarra}.
\end{remark}

From Theorem \ref{thmsqfree}, we get that $\mathrm{v}(I^k)$ is a linear function on $k$ from the very beginning whenever $I$ is a square-free monomial ideal with $\mathrm{v}(I)=\alpha(I)-1)$. In the next theorem, we show that this fact is true for a more general situation. Since the techniques of the two proofs are different, we keep both proofs separately.

\begin{theorem}\label{thmpowermon}
    Let $I\subset S$ be a monomial ideal with $\mathrm v(I)=\alpha(I)-1$. If there exists an associated prime $\mathfrak{p}$ of $I$ and a monomial $f\in S$ such that $(I:f)=\mathfrak{p}$, $\deg(f)=\mathrm{v}(I)$, and $x_i\nmid f$ for all $x_i\in\mathcal{G}(\mathfrak{p})$, then $\mathfrak{p}\in\Ass(I^k)$ and 
    $$\mathrm v(I^{k})=\mathrm{v}_{\mathfrak{p}}(I^{k})=\alpha(I)k-1~\mbox{for all}~k\geq 1.$$
\end{theorem}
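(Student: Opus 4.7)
The plan is to mirror the argument of Theorem \ref{thmsqfree}, with the hypothesis that $x_i\nmid f$ for every $x_i\in\mathcal{G}(\mathfrak{p})$ playing the role that square-freeness of $f$ played there. After relabelling variables I may assume $\mathfrak{p}=\langle x_1,\ldots,x_t\rangle$. Set $g:=x_1 f\in I$, so $\deg g=\alpha(I)$, and propose $g^{k-1}f=x_1^{k-1}f^{\,k}$ as the witness of degree $k\alpha(I)-1$. The main task is to establish
\[
(I^{k}:g^{k-1}f)=\mathfrak{p}.
\]

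The inclusion $\mathfrak{p}\subseteq(I^{k}:g^{k-1}f)$ is immediate: for every $x_i\in\mathfrak{p}$ we have $x_if\in I$, so $x_i\cdot g^{k-1}f=g^{k-1}(x_if)\in I^{k-1}\cdot I=I^{k}$. For the reverse inclusion I would take an arbitrary monomial generator
\[
h=\frac{h_1\cdots h_k}{\gcd(h_1\cdots h_k,\;g^{k-1}f)},\qquad h_i\in\mathcal{G}(I),
\]
and argue that some $x_j$ with $1\le j\le t$ divides $h$. Since $\mathfrak{p}\supseteq I$ (because $\mathfrak{p}\in\Ass(I)$), each monomial $h_i$ is divisible by at least one $x_{j_i}\in\mathcal{G}(\mathfrak{p})$, which gives the pigeonhole bound
\[
\sum_{j=1}^{t}\deg_{x_j}(h_1\cdots h_k)\ \ge\ k.
\]
Here the hypothesis $x_j\nmid f$ for $1\le j\le t$ is essential: it forces $\deg_{x_1}(g^{k-1}f)=k-1$ and $\deg_{x_j}(g^{k-1}f)=0$ for $2\le j\le t$. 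Consequently
\[
\sum_{j=1}^{t}\deg_{x_j}(h)\ \ge\ \sum_{j=1}^{t}\deg_{x_j}(h_1\cdots h_k)-(k-1)\ \ge\ k-(k-1)=1,
\]
so at least one $x_j\in\mathcal{G}(\mathfrak{p})$ divides $h$, i.e.\ $h\in\mathfrak{p}$.

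With the claim in hand, $\mathfrak{p}\in\Ass(I^{k})$ and $\mathrm{v}_{\mathfrak{p}}(I^{k})\le\deg(g^{k-1}f)=\alpha(I)k-1$. Since $\mathfrak{p}$ is generated by variables, $\alpha(\mathfrak{p})=1$, and Lemma \ref{low} gives $\mathrm{v}_{\mathfrak{p}}(I^{k})\ge\alpha(I^{k})-1=\alpha(I)k-1$, forcing equality. The same lemma (using $c(I^{k})=1$ for any monomial ideal) also yields $\mathrm{v}(I^{k})\ge\alpha(I)k-1$, which combined with $\mathrm{v}(I^{k})\le\mathrm{v}_{\mathfrak{p}}(I^{k})$ gives $\mathrm{v}(I^{k})=\mathrm{v}_{\mathfrak{p}}(I^{k})=\alpha(I)k-1$.

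The main obstacle is the reverse colon inclusion, where one has to control the exponents of $x_1,\ldots,x_t$ in an arbitrary product $h_1\cdots h_k$ of generators of $I$ against those in $g^{k-1}f$. The extra hypothesis $x_i\nmid f$ for $x_i\in\mathcal{G}(\mathfrak{p})$ is what makes the counting work: it ensures that no $x_j$ with $2\le j\le t$ ever shrinks in the quotient, and it caps the $x_1$-loss at exactly $k-1$, which is precisely the deficit absorbed by the bound $\sum_{j=1}^t\deg_{x_j}(h_1\cdots h_k)\ge k$. In the square-free setting this came for free, but in the general monomial case it has to be imposed as a separate condition.
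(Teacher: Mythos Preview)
Your argument is correct and reaches the same conclusion as the paper's proof, using the same witness $g^{k-1}f=x_1^{k-1}f^{k}$. The difference lies in how the reverse inclusion $(I^{k}:g^{k-1}f)\subseteq\mathfrak{p}$ is established. The paper proceeds by induction on $k$: if some $g_j$ equals $g$, one reduces to $(I^{k-1}:g^{k-2}f)$; if no $g_j$ equals $g$, the paper factors $\gcd(g_1\cdots g_k,x_i^{k-1}f^{k})=\gcd(g_1\cdots g_k,x_i^{k-1})\cdot\gcd(g_1\cdots g_k,f^{k})$ (valid since $x_i\nmid f$) and observes that dividing $x_{i_1}\cdots x_{i_k}m$ by a power of $x_i$ of degree at most $k-1$ still leaves a factor in $\mathfrak{p}$. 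Your proof bypasses the induction and the case distinction entirely: the single inequality $\sum_{j=1}^{t}\deg_{x_j}(h)\ge\sum_{j=1}^{t}\deg_{x_j}(h_1\cdots h_k)-\sum_{j=1}^{t}\deg_{x_j}(g^{k-1}f)\ge k-(k-1)=1$ does all the work at once. This is shorter and arguably more transparent, while the paper's inductive framing makes explicit the role of the base case $(I:f)=\mathfrak{p}$ at each step.
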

\begin{proof}
    Fix any $x_i\in\mathcal{G}(\mathfrak{p})$, and consider the monomial $g=x_if$. Then $g\in I$, and thus, it is clear that $\mathfrak{p}\subset(I^{k}:g^{k-1}f)$ for all $k\geq 1$. We claim that the reverse inclusion will also hold. To prove the reverse inclusion, we use induction on the powers of the ideal $I$. If $k=1$, then the statement follows by the given hypothesis. For some $k\geq 2$, let $(I^{k-1}:g^{k-2}f)\subset \mathfrak{p}$. We want to show that $(I^{k}:g^{k-1}f)\subset \mathfrak{p}$. Now, consider an element $h\in \mathcal{G}(I^{k}:g^{k-1}f)$. Then we can write 
    $$h=\frac{g_1g_2\ldots g_{k}}{\mathrm{gcd}(g_1g_2\ldots g_{k},g^{k-1}f)}=\frac{g_1g_2\ldots g_{k}}{\mathrm{gcd}(g_1g_2\ldots g_{k},x_{i}^{k-1}f^{k})},$$ 
    for some $g_1,\ldots,g_k\in \mathcal{G}(I)$. If any of $g_j$'s is equal to $g$, then it is easy to verify that $h\in (I^{k-1}:g^{k-2}f)\subset \mathfrak{p}$. Now, suppose $g_j\neq g$ for all $j\in\{1,\ldots,k\}$. Since $g_j\in I\subset \mathfrak{p}$, there exists $x_{i_{j}}\in\mathcal{G}(\mathfrak{p})$ such that $x_{i_j}|g_j$ for $j=1,\ldots,k$. This gives 
    $$\frac{g_1g_2\ldots g_{k}}{\mathrm{gcd}(g_1g_2\ldots g_{k},f^{k})}=x_{i_1}\ldots x_{i_k}m,$$ for some monomial $m$ as $x_{i_j}\nmid f$ for all $1\leq j\leq k$. Since $x_i\nmid f$, we have
    \begin{align*}
        h&=\frac{g_1g_2\ldots g_{k}}{\mathrm{gcd}(g_1g_2\ldots g_{k},x_{i}^{k-1}f^{k})}\\
        &=\frac{g_1g_2\ldots g_{k}}{\mathrm{gcd}(g_1g_2\ldots g_{k},x_{i}^{k-1})\mathrm{gcd}(g_1g_2\ldots g_{k},f^{k})}\\
        &=\frac{x_{i_1}\ldots x_{i_k}m}{\mathrm{gcd}(g_1g_2\ldots g_{k},x_{i}^{k-1})}.
    \end{align*}
     Since $x_{i_j}\in\mathfrak{p}$ for all $1\leq j\leq k$, $h=\frac{x_{i_1}\ldots x_{i_k}m}{\mathrm{gcd}(g_1g_2\ldots g_{k},x_{i}^{k-1})}\in\mathfrak{p}$. Therefore, $(I^{k}:g^{k-1}f)=\mathfrak{p}$, which gives $\mathrm{v}(I^{k})\leq \mathrm{v}_{\mathfrak{p}}(I^k)\leq \alpha(I)k-1$. Hence, from Lemma \ref{low}, it follows that $\mathrm v(I^{k})=\mathrm{v}_{\mathfrak{p}}(I^{k})=\alpha(I)k-1~\mbox{for all}~k\geq 1$.
\end{proof} 

The following example gives the existence of a non square-free monomial ideal, which satisfies the hypothesis of the above theorem.

\begin{example}
    Let us consider the monomial ideal $I=\langle y^2z, z^3, y^2x\rangle\subset K[x,y,z]$. Here $\Ass(I)=\{\langle x,z\rangle, \langle y,z\rangle\}$. Let us consider $f=y^2$. Then $(I:f)=\langle x,z\rangle$, which gives $\mathrm{v}(I)=2$ by Lemma \ref{low}. Since $f$ is not divisible by $x$ and $z$, it follows from the above theorem that $\mathrm{v}(I^k)=3k-1$ for all $k\geq1.$
\end{example}

\begin{proposition}\label{propv=1}
    Let $I\subset S$ be a monomial ideal with $\mathrm v(I)=1$. Then $$\mathrm v(I^{k})=2k-1~\mbox{for all}~k\geq 1.$$
\end{proposition}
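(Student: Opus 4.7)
Since every associated prime of a monomial ideal is generated by variables, $c(I)=c(I^k)=1$, and Lemma~\ref{low} applied to $I$ gives $\alpha(I)\le \mathrm{v}(I)+c(I)=2$. The claimed formula $2k-1$ reads as $\alpha(I)k-1$ only when $\alpha(I)=2$, so I focus on this case (equivalently, $\mathrm{v}(I)=\alpha(I)-1$). Then $\alpha(I^k)=2k$, and Lemma~\ref{low} applied to $I^k$ immediately yields the lower bound $\mathrm{v}(I^k)\ge 2k-1$.

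For the upper bound, fix $x_j$ and $\mathfrak{p}\in\Ass(I)$ with $(I:x_j)=\mathfrak{p}$, pick any $x_i\in\mathcal{G}(\mathfrak{p})$, and consider the degree-$(2k-1)$ monomial $h:=g^{k-1}x_j$, where $g:=x_ix_j\in I$. The inclusion $\mathfrak{p}\subseteq (I^k:h)$ is routine: for $x_l\in\mathcal{G}(\mathfrak{p})$, $x_lx_j\in I$, and so $x_l h=(x_lx_j)g^{k-1}\in I\cdot I^{k-1}=I^k$. If $x_j\notin\mathfrak{p}$, the reverse containment $(I^k:h)\subseteq\mathfrak{p}$ is precisely Theorem~\ref{thmpowermon}, completing the upper bound and the proof.

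The remaining case $x_j\in\mathfrak{p}$ (which forces $x_j^2\in\mathcal{G}(I)$) is the main obstacle, because the coprimality trick used in Theorem~\ref{thmpowermon} ($\gcd(x_i,f)=1$) breaks down. My plan is to run the same inductive scheme anyway: given a monomial $u\in(I^k:h)$, write $u h=h_1\cdots h_k\,m$ with $h_l\in\mathcal{G}(I)$; if some $h_l=g$, factor it off and invoke the inductive statement $(I^{k-1}:g^{k-2}x_j)\subseteq\mathfrak{p}$. Otherwise each $h_l$ has a $\mathfrak{p}$-divisor $x_{r_l}\in\mathcal{G}(\mathfrak{p})$, and any choice with $x_{r_l}\notin\{x_i,x_j\}$ immediately places $u$ in $\mathfrak{p}$ since $x_{r_l}\nmid h$. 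The delicate subcase is when every $x_{r_l}$ must lie in $\{x_i,x_j\}$; here I plan to exploit $(I:x_j)=\mathfrak{p}$ directly. Writing $h_l=x_i^{a_l}x_j^{b_l}w_l$ with $w_l$ free of every $\mathfrak{p}$-variable, the pair $(a_l,b_l)=(0,1)$ is excluded, because it would give $w_l=h_l/x_j\in(I:x_j)=\mathfrak{p}$, contradicting the choice of $w_l$. A careful count of the $x_i$- and $x_j$-exponents in the divisibility $h_1\cdots h_k\mid u\,x_i^{k-1}x_j^k$ then forces $x_i\mid u$ or $x_j\mid u$, yielding $u\in\mathfrak{p}$; the variant $x_i=x_j$ (relevant when $\mathfrak{p}=\langle x_j\rangle$) is handled by the same bookkeeping applied to $h=x_j^{2k-1}$.
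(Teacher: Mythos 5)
Your reduction to $\alpha(I)=2$, the lower bound via Lemma~\ref{low}, and the case $x_j\notin\mathfrak{p}$ (via Theorem~\ref{thmpowermon}) all match the paper. The gap is in the remaining case $x_j\in\mathfrak{p}$: the witness $h=(x_ix_j)^{k-1}x_j$ built from an \emph{arbitrary} $x_i\in\mathcal{G}(\mathfrak{p})$ does not work, and no exponent bookkeeping can close your ``delicate subcase'', because the containment $(I^k:h)\subseteq\mathfrak{p}$ is simply false for such a choice. Concretely, take $I=\langle x^2,xy,yzw\rangle\subset K[x,y,z,w]$. Then $(I:x)=\langle x,y\rangle=\mathfrak{p}$, so $\mathrm{v}(I)=1$ with $x_j=x\in\mathfrak{p}$; choosing $x_i=y$ gives, for $k=2$, $h=x^2y$ and $(I^2:x^2y)=\langle x,y,zw\rangle$, which is neither contained in $\mathfrak{p}$ nor even prime, so this $h$ certifies nothing about $\mathrm{v}(I^2)$. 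In your notation this is exactly the subcase you flag: $u=zw$ lies in the colon since $u\cdot x^2y=(x^2)(yzw)\in I^2$, with $h_1=x^2$, $h_2=yzw$, all $\mathfrak{p}$-divisors of $h_1,h_2$ in $\{x_i,x_j\}$, exponent pairs $(a_l,b_l)=(0,2)$ and $(1,0)$, and the divisibility $h_1h_2\mid u\,x_ix_j^2$ holds although $x_i\nmid u$ and $x_j\nmid u$. The obstruction is the $\mathfrak{p}$-free factor $w_l$ of a generator of the form $x_i^{a_l}w_l$, which can migrate entirely into $u$; your planned count cannot rule this out.

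The paper avoids this by always taking $x_i=x_j$ when $x_j\in\mathfrak{p}$ (not only when $\mathfrak{p}$ is principal): set $g=x_j^2$, which lies in $\mathcal{G}(I)$ since $x_j\in(I:x_j)$ and $x_j\notin I$, and use $h=x_j^{2k-1}$. Then, by induction, if no factor $g_l\in\mathcal{G}(I)$ of a colon generator equals $x_j^2$, minimality forces the $x_j$-exponent of each $g_l$ to be at most $1$, so $\gcd(g_1\cdots g_k,x_j^{2k-1})=\prod_l\gcd(g_l,x_j)$ and the colon generator equals $\prod_l g_l/\gcd(g_l,x_j)$; each factor lies in $(I:x_j)=\mathfrak{p}$, which is the mechanism replacing your coprimality trick and yields $(I^k:x_j^{2k-1})=\mathfrak{p}$, hence $\mathrm{v}(I^k)\le 2k-1$. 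Restricting your construction to this choice (keeping your explicit reduction to $\alpha(I)=2$, which is also implicit in the paper's argument) repairs the proof; in the example above, $(I^2:x^3)=\langle x,y\rangle$ indeed works.
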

\begin{proof}
   Since $\mathrm{v}(I)=1$, there exists a variable $x\in S$ such that $(I:x)=\mathfrak{p}$ for some $\mathfrak{p}\in \Ass(I).$  If $x\notin \mathfrak{p}$, then by Theorem \ref{thmpowermon}, $\mathrm v(I^{k})=2k-1$ for all $k\geq 1$. Now, suppose $x\in \mathfrak{p}$. Then $g=x^{2}\in \mathcal{G}(I)$ as $x\notin I$. Since $\mathfrak{p}\subset (I:x)$, it is clear that $\mathfrak{p}\subset(I^{k}:g^{k-1}x)$ for all $k\geq 1$. To get the reverse inclusion, we use induction on the powers of the ideal $I$. If $k=1$, then the statement follows trivially. For $k\geq 2$, let $(I^{k-1}:g^{k-2}x)\subset \mathfrak{p}$. Now, consider an element $h\in \mathcal{G}(I^{k}:g^{k-1}x)$. Then we can write 
   $$h=\frac{g_1g_2\ldots g_{k}}{\mathrm{gcd}(g_1g_2\ldots g_{k},g^{k-1}x)}=\frac{g_1g_2\ldots g_{k}}{\mathrm{gcd}(g_1g_2\ldots g_{k},x^{2k-1})},$$
   for some $g_1,\ldots,g_k\in \mathcal{G}(I)$. If any of $g_i$'s is equal to $g$, then $h\in (I^{k-1}:g^{k-2}x)$, which is contained in $\mathfrak{p}$ by induction hypothesis. Let $g_i\neq g$ for all $i\in\{1,\ldots,k\}$ and $m=\mathrm{gcd}(g_1g_2\ldots g_{k},x^{2k-1})$. Then $\deg (m)\leq k$, otherwise we get some $g_i$ such that $x^{2}|g_i$, which is a contradiction to the fact that $g_i\neq g$ for all $1\leq i\leq k$ and $g_1,\ldots,g_k,g\in\mathcal{G}(I)$. If $m=1$, then $h\in (I^{k-1}:g^{k-2}x)\subset \mathfrak{p}$. Now, for $\deg (m)\geq 1$, there exists some $g_i$ such that $\frac{g_i}{\mathrm{gcd}(g_i,x)}=\frac{g_i}{x}\mid h$. Since $\frac{g_i}{\mathrm{gcd}(g_i,x)}\in\mathfrak{p}$ for all $1\leq i\leq k$, we have $h\in\mathfrak{p}$. Therefore, $(I^{k}:g^{k-1}x)= \mathfrak{p}$, which gives $\mathrm v(I^{k})\leq 2k-1$. Using Lemma \ref{low}, we conclude that $\mathrm v(I^{k})=2k-1$ for all $k\geq 1$.
\end{proof}

The following example shows that for any monomial ideal $I$ with $\mathrm v(I)=\alpha(I)-1$, the above statement is not true.

\begin{example}\label{exampowerneqlb}
    Let $I=\langle x_1x_{2}^{2},x_{1}^{2}x_2,x_{1}^{2}x_3x_4,x_{2}^{2}x_3x_4\rangle\subset K[x_1,x_2,x_3,x_4]$ be a monomial ideal. Here $\alpha(I)=3$ and $\mathrm v(I)=2=\alpha(I)-1$, but $\mathrm v(I^2)=6\neq 2\alpha(I)-1.$
\end{example}

\begin{definition}
  The polarization of monomials of the form $x_{i}^{a_i}$, denoted by $x_{i}^{a_i}(pol)$, is defined as $x_{i}^{a_i}(pol)=\prod_{j=1}^{a_i}x_{i,j}$. For $\mathbf{X}^{\mathbf{a}}=\prod_{t=1}^{n}x_{t}^{a_t}$, $\mathbf{X}^{\mathbf{a}}(pol)=\prod_{t=1}^{n}x_{t}^{a_t}(pol)$.
If $I=\langle \mathbf{X}^{\mathbf{\alpha_1}},\ldots,\mathbf{X}^{\mathbf{\alpha_m}}\rangle\subset S$ is a monomial ideal, then $I(pol)$ is defined as follows
\begin{center}
    $I(pol)=\langle \mathbf{X}^{\mathbf{\alpha_1}}(pol),\ldots,\mathbf{X}^{\mathbf{\alpha_m}}(pol) \rangle$
\end{center}
in the ring $S(pol)=K[x_{i,j}|1\leq i \leq n, 1\leq p_i]$, where $p_i$ is the power of $x_i$ in the lcm of 
 $\{\mathbf{X}^{\mathbf{\alpha_1}},\ldots,\mathbf{X}^{\mathbf{\alpha_m}}\}$.
 \end{definition}
 
\begin{theorem}\label{thmdeg2mon}
   Let $I\subset S$ be a monomial ideal such that $\alpha(I)=2$ and $I$ has a linear resolution. Then $\mathrm{v}(I^k)=2k-1$ for all $k\geq 1$. 
\end{theorem}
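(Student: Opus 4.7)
The plan is to prove $\mathrm{v}(I) = 1$ and then invoke Proposition \ref{propv=1} to conclude $\mathrm{v}(I^k) = 2k - 1$ for all $k \geq 1$. The lower bound $\mathrm{v}(I) \geq 1$ is immediate from Lemma \ref{low}: since $I$ is monomial, every associated prime is a monomial prime generated by variables, so $c(I) = 1$ and hence $\mathrm{v}(I) \geq \alpha(I) - c(I) = 1$. It therefore suffices to exhibit a variable $x_i$ such that $(I : x_i)$ is a monomial prime.

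To do this I would polarize. The ideal $I(pol) \subset S(pol)$ is square-free and generated in degree $2$, hence equals $I(G)$ for some simple graph $G$ on the polarized variables. Since polarization preserves graded Betti numbers, $I(G)$ also has a linear resolution, and Fr\"{o}berg's theorem then tells us that $\bar{G}$ is chordal. Every chordal graph has a simplicial vertex, and because the copies $x_{i,1}, \ldots, x_{i, p_i}$ of a single original variable $x_i$ induce a clique in $G$ (when $p_i \geq 2$) and share identical external neighborhoods, they form a false-twin class in $\bar{G}$; in particular one of them is simplicial in $\bar{G}$ iff all are. Accordingly, one may pick a simplicial vertex of $\bar{G}$ of the form $v = x_{i,1}$ for some $i$.

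Now $v$ being simplicial in $\bar{G}$ means the non-neighbors of $v$ in $G$ form an independent set in $G$. A short case analysis identifies these non-neighbors as $\{x_{l,t} : l \neq i,\ x_i x_l \notin I,\ 1 \leq t \leq p_l\}$, and translating the independent-set condition through depolarization yields, for $a, b \neq i$ with $x_i x_a, x_i x_b \notin I$: $x_a x_b \notin I$ (from distinct copies $x_{a,s}, x_{b,t}$ when $a \neq b$), and $x_a^2 \notin I$ (for otherwise $p_a = 2$ and $x_{a,1} x_{a,2} \in I(pol)$ spoils the independent set). Plugging these conclusions into the standard formula $(I : x_i) = \langle u/\gcd(u, x_i) : u \in \mathcal{G}(I)\rangle$ yields
$$(I : x_i) = \langle x_a : x_i x_a \in I\rangle,$$
a monomial prime. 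Hence $\mathrm{v}(I) \leq 1$, so $\mathrm{v}(I) = 1$.

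The main technical hurdle is the depolarization bookkeeping in the third paragraph: one must simultaneously track non-neighbors of $v$ lying in different variable-classes (yielding $x_a x_b \notin I$) and in the same variable-class (forcing $x_a^2 \notin I$). The remaining ingredients are standard: preservation of Betti numbers under polarization, Fr\"{o}berg's characterization of edge ideals with linear resolution, and the existence of a simplicial vertex in any chordal graph. Once $\mathrm{v}(I) = 1$ is established, Proposition \ref{propv=1} finishes the argument.
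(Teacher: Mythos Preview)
Your overall strategy matches the paper's: reduce to $\mathrm{v}(I)=1$ and then invoke Proposition~\ref{propv=1}. The paper obtains $\mathrm{v}(I)=1$ in two lines by citing \cite[Corollary~5.3]{ficarra} (which gives $\mathrm{v}(I(\mathrm{pol}))=1$ whenever the edge ideal $I(\mathrm{pol})$ has a linear resolution) together with \cite[Theorem~4.1]{ficarra} (which gives $\mathrm{v}(I)=\mathrm{v}(I(\mathrm{pol}))$). You instead unwind these results by hand via Fr\"oberg's theorem and an explicit simplicial-vertex/depolarization computation; this is more self-contained but otherwise the same route.

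Your hand computation, however, has two inaccuracies you should repair. First, the copies $x_{i,1}$ and $x_{i,2}$ are \emph{not} false twins: since every generator has degree $2$, the only generator whose polarization involves $x_{i,2}$ is $x_i^2$, so $N_G(x_{i,2})=\{x_{i,1}\}$, whereas $x_{i,1}$ is additionally adjacent to every $x_{j,1}$ with $x_ix_j\in I$. What \emph{is} true, and suffices, is the containment $N_{\bar G}(x_{i,1})\subseteq N_{\bar G}(x_{i,2})$; hence if some $x_{i,2}$ is simplicial in $\bar G$ then so is $x_{i,1}$, and you may indeed choose a simplicial vertex of the form $x_{i,1}$. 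Second, your description of the non-neighbours of $x_{i,1}$ in $G$ omits the vertices $x_{l,2}$ with $l\neq i$, $p_l=2$, and $x_ix_l\in I$ (these are never adjacent to $x_{i,1}$). Fortunately the two conclusions you extract---$x_ax_b\notin I$ and $x_a^2\notin I$ whenever $x_ix_a,x_ix_b\notin I$---only use vertices that do lie in your smaller set, so they remain valid, and they are exactly the conditions that force $(I:x_i)$ to be generated by variables. With these two fixes the argument goes through.
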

\begin{proof}
    Since $I$ has a linear resolution, we have $\reg (I)=2$. It is known that $\reg(I)=\reg(I(pol))$ (\cite[Corollary 1.6.3]{Hibi}). Thus, $\reg(I(pol))=2$, which implies $I(pol)$ has a linear resolution. Since $\alpha(I)=2$, $I(pol)$ can be viewed as an edge ideal of a graph. Therefore, by using \cite[Corollary 5.3]{ficarra}, we get $\mathrm v(I(pol))=1$. Again, from \cite[Theorem 4.1.]{ficarra} it follows that $\mathrm v(I)=\mathrm v(I(pol))$. Thus, $\mathrm v(I)=1$, and hence, $\mathrm{v}(I^k)=2k-1$ for ll $k\geq 1$ by Proposition \ref{propv=1}.
\end{proof}

\begin{definition}
    Let $G$ be a simple graph. A weighted oriented graph $D$ ,whose underlying graph is $G$, is a triplet $(V(D),E(D),w)$ where $V(D)=V(G)$, $E(D)\subseteq V(D)\times V(D)$ such that $\{(x,y)\mid (x,y)\in E(D)\}$ and $w:V(D)\rightarrow \mathbb N$ is a function. The weight of $x\in V(D)$ is $w(  x).$ An ordered pair $(x,y)\in E(D)$ if there is a directed edge from the vertex $x$ to $y$. Then edge ideal of $D$ is defined as follows:
    $$I(D)=\langle x_ix_j^{w(x_j)}~|~(x_i,x_j)\in E(D)\rangle.$$
\end{definition}

\begin{theorem}\label{thmwoglp}
    Let $  {D}$ be a connected weighted oriented graph with $V({D})=\{x_1,\ldots,x_t\}$ and $I({D})\neq I(G)$, where $G$ is the underlying graph of $  {D}$. If $I(  {D})$ has linear resolution, then $\mathrm v(I({D})^k)=\alpha(I(D))k-1$ for all $k\geq 1$.
\end{theorem}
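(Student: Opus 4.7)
The plan is to reduce $D$ to an inward-oriented star via the linear resolution hypothesis and then apply Theorem \ref{thmpowermon}. Set $d := \alpha(I(D))$. Since $I(D)$ has linear resolution, it is equigenerated in degree $d$, forcing $1 + w(x_j) = d$ for every head vertex $x_j$. The hypothesis $I(D) \neq I(G)$ rules out $d = 2$ (which would make every head weight $1$, giving $I(D) = I(G)$), so $d \geq 3$. The lower bound $\mathrm{v}(I(D)^k) \geq dk - 1$ is immediate from Lemma \ref{low}, because the associated primes of the monomial ideal $I(D)^k$ are generated by variables, whence $c(I(D)^k) = 1$.

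The crucial structural claim is that, when $d \geq 3$, $D$ must be an \emph{inward-oriented star}: there is a unique vertex $x_j$ with $w(x_j) = d - 1$ such that every edge of $D$ has the form $(x_i, x_j)$ with $x_i \in N^-(x_j)$, and $x_j$ itself never appears as a tail. Indeed, any two generators $f = x_a x_b^{d-1}$ and $g = x_c x_e^{d-1}$ with $b \neq e$ satisfy $\deg(\gcd(f,g)) \leq 1$ in the absence of self-loops and bidirected edges, so $\deg(\mathrm{lcm}(f,g)) \geq 2d - 1 > d + 1$ whenever $d \geq 3$. The multidegree $\mathrm{lcm}(f,g)$ then forces a nonzero first Betti number outside the linear strand of the resolution of $I(D)$, contradicting linearity; connectedness of $D$ then identifies $x_j$ as the unique centre.

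Once $D$ is recognised as an inward star centred at $x_j$, the ideal factors as $I(D) = x_j^{d-1} \cdot \langle N^-(x_j)\rangle$, so the colon $(I(D) : x_j^{d-1}) = \langle N^-(x_j)\rangle =: \mathfrak{p}$ is prime and belongs to $\Ass(I(D))$. Taking $f := x_j^{d-1}$, we have $\deg(f) = d - 1 = \alpha(I(D)) - 1$, and since $x_i \neq x_j$ for each $x_i \in N^-(x_j)$, the hypothesis $x_i \nmid f$ for all $x_i \in \mathcal{G}(\mathfrak{p})$ of Theorem \ref{thmpowermon} is automatic. Theorem \ref{thmpowermon} then yields $\mathrm{v}(I(D)^k) = \alpha(I(D))k - 1$ for all $k \geq 1$, matching the lower bound.

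The main obstacle is making the structural reduction in the second paragraph fully rigorous: one must verify that the high-degree Taylor syzygies produced by distinct-head pairs cannot be cancelled in the minimal free resolution of $I(D)$ by combinations of linear Taylor syzygies. Using the multigraded Betti-number formula $\beta_{1,m}(I(D)) = \dim_K \tilde{H}_0(\Delta_m; K)$, this boils down to showing that at the multidegree $m = \mathrm{lcm}(f,g)$ for distinct-head generators, the simplicial complex $\Delta_m$ of subsets of generators whose lcm divides $m$ is disconnected, forcing $\beta_{1,m}(I(D)) \neq 0$; this in turn is governed by the combinatorial structure of $D$ (specifically, by which edges $(x_p, x_q)$ yield generators dividing $m$), and is where most of the work of the proof is concentrated.
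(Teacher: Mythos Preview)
Your overall strategy is sound and, once the structural reduction is in place, the endgame via Theorem~\ref{thmpowermon} with $f=x_j^{d-1}$ is a clean and correct way to finish; the hypotheses of that theorem are visibly satisfied since $(I(D):x_j^{d-1})=\langle x_l:l\neq j\rangle$ is prime and none of its generators divides $x_j^{d-1}$.

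The place where your argument diverges from the paper is precisely the point you flag as the main obstacle. The paper does not attempt a Betti-number analysis at all: it simply invokes \cite[Theorem~3.5]{banerjee}, which says that for a connected weighted oriented graph $D$ with $I(D)\neq I(G)$, the ideal $I(D)$ has linear resolution if and only if, after relabelling, $I(D)^k = x_i^{kw}\langle x_1,\ldots,x_{i-1},x_{i+1},\ldots,x_t\rangle^k$ for all $k\ge 1$, with $w=w(x_i)>1$. This is exactly your inward-star description (with $N^-(x_i)=V(D)\setminus\{x_i\}$, forced by connectedness). From there the paper computes directly that $(I(D)^k:(x_i^{w}x_1)^{k-1}x_i^{w})=\langle x_l:l\neq i\rangle$, giving $\mathrm v(I(D)^k)\le k(w+1)-1=\alpha(I(D))k-1$, and finishes with Lemma~\ref{low}. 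Your use of Theorem~\ref{thmpowermon} in place of this direct colon computation is an equally valid alternative, and arguably more in keeping with the machinery the paper has already built.

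Regarding your Betti-number sketch: the degree count $\deg\!\big(\mathrm{lcm}(f,g)\big)\ge 2d-1>d+1$ for distinct-head generators is correct (using that $D$ has neither loops nor bidirected edges), but the passage from ``the Taylor syzygy sits in high degree'' to ``$\beta_{1,m}(I(D))\neq 0$'' does require the connectivity analysis of $\Delta_m$ that you outline, and there can be several other generators dividing $m$ (for instance, when $a,b,c,e$ are distinct one may have up to five generators of $I(D)$ dividing $x_a x_c x_b^{\,d-1}x_e^{\,d-1}$). This is exactly what \cite[Theorem~3.5]{banerjee} handles, so rather than redoing that work you should cite it and proceed directly to your application of Theorem~\ref{thmpowermon}.
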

\begin{proof} 
By hypothesis, $I({D})$ has a linear resolution. Then by using \cite[Theorem 3.5.]{banerjee}, we get $I(D)^k=x_{i}^{kw}\langle x_1,\ldots,x_{i-1},x_{i+1},\ldots,x_t\rangle^k $ for all $k\geq 1$, where $i\in \{1,\dots,t\}$ and $w>1$. This gives $(I(  {D})^k:(x_{i}^{w}x_1)^{k-1}x_{i}^{w})=\langle x_1,\ldots,x_{i-1},x_{i+1},\ldots,x_t\rangle$. Thus, $\mathrm v(I(D)^k)\leq(k-1)(w+1)+w=k(w+1)-1=\alpha(I(D))k-1$. Hence, by Lemma \ref{low}, we have $\mathrm v(I(D)^k)=\alpha(I(D))k-1$ for all $k\geq 1$.
\end{proof}

\begin{definition}\label{defvertexsplit}{\rm 
A monomial ideal $I\subseteq R=K[X]$ is called {\it vertex splittable} if it can be obtained by the following recursive procedure.
\begin{enumerate}
    \item If $v$ is a monomial and $I=(v)$, $I=(0)$ or $I=R$, then $I$ is vertex splittable.
    \item If there is a variable $x$ in $R$ and vertex splittable ideals $I_1$ and $I_2$ in $K[X\setminus \{x\}]$ such that $I=xI_1+I_2$, $I_2 \subseteq I_1 $ and $\mathcal{G}(I)= \mathcal{G}(xI_1)\sqcup \mathcal{G}(I_2)$, then $I$ is a vertex splittable. For $I=xI_1+I_2$, the variable $x$ is said to be a {\it splitting variable} for $I$.
\end{enumerate}
}
\end{definition}
\begin{theorem}\label{thmvsplit}
    Let $I$ be a proper equigenerated vertex splittable ideal of $S$. Then $\mathrm{v}(I^{k})=\alpha(I)k-1$ for all $k\geq 1$.
\end{theorem}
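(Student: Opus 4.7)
The plan is to induct on the number of variables in the ambient polynomial ring $S$, exploiting the recursive structure of vertex splittable ideals with Theorem \ref{thmprincipal} handling the principal case. Since $I$ is a monomial ideal, so is $I^k$, and every associated prime of $I^k$ is generated by variables; hence $c(I^k)=1$ and Lemma \ref{low} already supplies the lower bound $\mathrm{v}(I^k)\geq \alpha(I)k-1$. It therefore suffices to construct, for each $k\geq 1$, a witness monomial $f\in S$ of degree $\alpha(I)k-1$ whose colon $(I^k:f)$ is a prime ideal.

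Two subcases cover the base of the induction. If $I$ is principal, Theorem \ref{thmprincipal} yields $\mathrm{v}(I^k)=\alpha(I)k-1$ immediately, as $c(I)=1$ for any principal monomial ideal. If $\alpha(I)=1$, then $I=\langle y_1,\ldots,y_r\rangle$ is itself a monomial prime, $I^k$ is $I$-primary, and the witness $f=y_1^{k-1}$ satisfies $(I^k:y_1^{k-1})=I$ directly.

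For the inductive step, assume $I$ is non-principal with $\alpha(I)=d\geq 2$, and by Definition \ref{defvertexsplit} write $I=xI_1+I_2$ with $I_1,I_2\subset K[X\setminus\{x\}]$ vertex splittable, $I_2\subseteq I_1$, and $\mathcal{G}(I)=\mathcal{G}(xI_1)\sqcup\mathcal{G}(I_2)$. Equigeneratedness forces $I_1$ to be equigenerated of degree $d-1$ and proper (otherwise $1\in I_1$ would give $x\in I$, contradicting $d\geq 2$). The crucial computation is the colon identity
\[
(I^k:x^k)\;=\;I_1^k S,
\]
where $I_1^kS$ denotes the extension of $I_1^k$ to $S$. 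The non-trivial inclusion uses that each monomial generator of $I^k$ factors as $\prod_{i=1}^k g_i$ with $g_i=x^{\epsilon_i}u_i$, $\epsilon_i\in\{0,1\}$ and $u_i\in I_1$ (here $I_2\subseteq I_1$ is essential); comparing the $x$-adic valuations of $x^kg\in I^k$ then forces $g\in I_1^kS$, while the reverse inclusion is immediate from $x^kI_1^k\subseteq(xI_1)^k\subseteq I^k$.

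Applying the induction hypothesis to $I_1$ in $K[X\setminus\{x\}]$ (which has strictly fewer variables) yields a monomial $f_1$ with $(I_1^k:f_1)=\mathfrak{p}_1\in\Ass(I_1^k)$ and $\deg(f_1)=(d-1)k-1$. Setting $f=x^kf_1$, we then compute
\[
(I^k:f)\;=\;\bigl((I^k:x^k):f_1\bigr)\;=\;(I_1^kS:f_1)\;=\;\mathfrak{p}_1S,
\]
an associated prime of $I^k$, while $\deg(f)=k+(d-1)k-1=\alpha(I)k-1$. This matches the lower bound and closes the induction. The main obstacle is the colon identity $(I^k:x^k)=I_1^kS$: it depends essentially on the disjoint-union structure of $\mathcal{G}(I)$ together with $I_2\subseteq I_1$, which together allow every non-$x$ factor of any monomial in $I^k$ to be pushed into $I_1$. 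Once this identity is established, the inductive passage from $I_1^k$ to $I^k$ is a clean multiplication by $x^k$.
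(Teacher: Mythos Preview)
Your proof is correct and follows essentially the same route as the paper's: both exploit the splitting $I=xI_1+I_2$ to pass to $I_1$ by induction and exhibit the witness $f=x^kf_1$ with $(I^k:f)=\mathfrak{p}_1S$. The only differences are organizational---you induct on the number of variables rather than on $\alpha(I)$, and you isolate the identity $(I^k:x^k)=I_1^kS$ as a separate lemma, whereas the paper computes $(I^k:x^kf_1)=\mathfrak{p}$ directly on generators---but the underlying argument is the same.
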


\begin{proof}
    We proceed by induction on $\alpha(I)$. If $\alpha(I)=1$, then $I$ is a prime ideal generated by a set of variables. In this case, taking a variable $x\in I$, we get $I^{k}:x^{k-1}=I$ and the result follows. Now, let us assume that the result is true for any proper equigenerated vertex splittable ideal $I^{'}$ with $\alpha(I^{'})=\alpha(I)-1$. Now, $I$ being vertex splittable, we can write $I=xI_1+I_2$, where $x$ is a variable, $I_2\subseteq I_1$, $I_1$ and $I_2$ are vertex splittable ideals such that $G(I)=G(xI_1)\sqcup G(I_2)$ and no generators of $I_j$ is divisible by $x$ for each $j=1,2$. We may assume $\alpha(I)>1$, and thus, $I_1$ is a proper equigenerated vertex splittable ideal with $\alpha(I_1)=\alpha(I)-1$. By induction hypothesis, we have $\mathrm{v}(I_1^{k})=\alpha(I_1)k-1$. Therefore, there exist $\mathfrak{p}\in\mathrm{Ass}(I_1^{k})$ and a monomial $f^{'}_k$ with $\mathrm{deg}(f^{'}_k)=\alpha(I_1)k-1$ such that $I_{1}^{k}:f^{'}_k=\mathfrak{p}$. Let us consider the monomial $f_k=f^{'}_{k}x^{k}$. Now, $\mathfrak{p}f_{k}^{'}\subseteq I_{1}^{k}$ implies $\mathfrak{p}f_{k}\subseteq (xI_1)^{k}\subseteq I^k$, i.e., $\mathfrak{p}\subseteq I^{k}:f_k$. Let $u_1\ldots u_r u_{r+1}\ldots u_k\in G(I^{k})$ be such that $u_1,\ldots,u_r\in G(xI_1)$ and $u_{r+1},\ldots,u_{k}\in G(I_2)$. Now, we can write $u_{i}=v_{i}x$, where $v_i\in G(I_1)$ and $1\leq i\leq r$. Also, $I_2$ being contained in $I_1$, we can write $u_i=v_{i}z_{i}$ for $r+1\leq i\leq k$, where $v_i\in G(I_1)$ and $z_i$'s are some monomials. Let us consider the monomial $g=\frac{u_1\ldots u_k}{\mathrm{gcd}(u_1\ldots u_k, f_k)}$. From our choices of $u_i$'s, it is clear that $x^{r}$ divides $u_1\ldots u_k$, but $x^{r+1}$ does not divide $u_1\ldots u_k$. Thus, $g$ is equal to $\frac{v_1\ldots v_k z_{r+1}\ldots z_k}{\mathrm{gcd}(v_1\ldots v_k z_{r+1}\ldots z_k, f^{'}_{k})}$, which is divisible by $\frac{v_1\ldots v_k}{\mathrm{gcd}(v_1\ldots v_k, f^{'}_{k})}$. Now, $v_1\ldots v_k\in G(I_1^{k})$ and $I_{1}^{k}:f^{'}_{k}=\mathfrak{p}$ together give $\frac{v_1\ldots v_k}{\mathrm{gcd}(v_1\ldots v_k, f^{'}_{k})}\in \mathfrak{p}$, which again imply $g\in \mathfrak{p}$. Since $I^{k}:f_k=\big\langle \frac{u_1\ldots u_k}{\mathrm{gcd}(u_1\ldots u_k, f_k)}\mid u_1,\ldots,u_k\in G(I)\big\rangle$ and we choose $u_1,\ldots,u_k\in G(I)$ in an arbitrary way, we have $I^{k}:f_{k}=\mathfrak{p}$. Note that $\mathrm{deg}(f_k)=\mathrm{deg}(f_{k}^{'})+k=\alpha(I)k-1$. Hence, by Lemma \ref{low}, $\mathrm{v}(I^{k})=\alpha(I)k-1$ for all $k\geq 1$.
\end{proof}

\section{Stability index of $\mathrm{v}$-numbers of edge ideals}\label{secvpoweredge}

In this section, we study the stability index of $\mathrm{v}$-number for edge ideals of graphs. Also, we give the explicit expression of the $\mathrm{v}$-function.

\begin{theorem}\label{bound}
    Let $G$ be a connected graph with $m$ edges. Then $\mathrm v(I(G)^k)=2k-1$ for all $k\geq m+1$.
\end{theorem}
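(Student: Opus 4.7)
The plan is to establish the upper bound $\mathrm v(I(G)^k)\le 2k-1$ for $k\ge m+1$; the matching lower bound $\mathrm v(I(G)^k)\ge 2k-1$ is supplied by Lemma \ref{low} because $\alpha(I(G))=2$ and $c(I(G)^k)=1$, as every associated prime of $I(G)^k$ is generated by variables. It suffices to exhibit a monomial $f$ of degree $2k-1$ for which $(I(G)^k:f)$ is a prime ideal generated by variables, because such a prime then automatically lies in $\mathrm{Ass}(I(G)^k)$ by the very definition of associated primes.

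Fix any vertex $y\in V(G)$ and a neighbour $y'\in N_G(y)$ (both exist since $G$ is connected with $m\ge 1$; the case $m=0$ is trivial). Set $e:=\{y,y'\}$, enumerate the edges of $G$ as $e_1,\ldots,e_m$, and for $k\ge m+1$ extend the list to length $k-1$ by taking $e_{m+1}=e_{m+2}=\cdots=e_{k-1}=e$. Define
$$f:=y\cdot e_1 e_2\cdots e_{k-1},$$
a monomial of degree $2k-1$; because $\deg f<2k=\alpha(I(G)^k)$, automatically $f\notin I(G)^k$, so $(I(G)^k:f)$ is a proper ideal. I would then analyse $(I(G)^k:f)=((I(G)^k:e_1\cdots e_{k-1}):y)$ via Banerjee's Theorem \ref{even}: the monomial generators of $(I(G)^k:e_1\cdots e_{k-1})$ are the edges of $G$ together with the even-connected pairs $uv$ with respect to the multiset $\mathcal M:=\{e_1,\ldots,e_{k-1}\}$, and colonning by $y$ replaces each generator involving $y$ by the corresponding single vertex. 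Introducing
$$S:=\{z\in V(G):yz\text{ is an edge of }G\text{ or is even-connected via }\mathcal M\},\qquad T:=V(G)\setminus S,$$
one has $(I(G)^k:f)\supseteq\langle S\rangle$, and equality $(I(G)^k:f)=\langle S\rangle$ holds precisely when every pair-generator $uv$ with $y\notin\{u,v\}$ lies in $\langle S\rangle$, which amounts to $T$ being an independent set of $G$.

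The independence of $T$ is the combinatorial crux. In the bipartite case with $V(G)=X\sqcup Y$ and $y\in X$, every odd walk from $y$ lands in $Y$, and by connectedness each $v\in Y$ admits either a direct edge to $y$, a shortest odd walk of length $\ge 5$ fitting the multiplicities of $\mathcal M$, or a length-$3$ walk $y\to a\to b\to v$ that we pad to length $5$ via the detour $y\to y'\to y$, consuming a single copy of $e$ at an even position; hence $S=Y$ and $T=X$ is independent because bipartite edges cross the bipartition. The non-bipartite case is the main obstacle: given $u,v\in T$ with $uv$ an edge of $G$, take a shortest $y\to u$ walk in $G$ of even length (such exists because non-bipartite connected graphs admit walks of both parities between any two vertices), pad it if necessary to length $\ge 4$ using the $y\to y'\to y$ detour, and verify that the resulting walk uses each edge at most once at even positions---the condition $k\ge m+1$ is precisely what supplies the required copy of $e$. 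Appending the edge $uv$ at the last (odd) position then produces an even-connected walk $y\to v$ of odd length $\ge 5$, forcing $v\in S$ and contradicting $v\in T$. Once $T$ is established as independent, $(I(G)^k:f)=\langle S\rangle$ is a prime ideal generated by variables, lies in $\mathrm{Ass}(I(G)^k)$ by definition, and combined with Lemma \ref{low} yields $\mathrm v(I(G)^k)=2k-1$ for all $k\ge m+1$.
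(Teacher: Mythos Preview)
Your overall strategy matches the paper's: apply Banerjee's Theorem~\ref{even} to identify the degree-two generators of $(I(G)^k:e_1\cdots e_{k-1})$, colon by a single vertex, and split into bipartite and non-bipartite cases. Your bipartite argument is fine---you correctly establish $S=Y$, and since every generator $uv$ (edge or even-connected pair) in a bipartite graph joins the two parts, one endpoint always lies in $S$.

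The non-bipartite case, however, contains a genuine gap. Your reduction ``$(I(G)^k:f)=\langle S\rangle$ holds precisely when $T$ is an independent set of $G$'' is incorrect: the generators of $(I(G)^k:e_1\cdots e_{k-1})$ include not only edges of $G$ but also \emph{even-connected pairs} $uv$ (with $u\neq v$) and squares $u^2$, and independence of $T$ in $G$ does not rule out such a pair having both endpoints in $T$. You must additionally show that no two vertices of $T$ are even-connected via $\mathcal M$ and that no vertex of $T$ is even-connected to itself; your argument addresses only edges $uv\in E(G)$. Moreover, the step ``verify that the resulting walk uses each edge at most once at even positions'' is asserted rather than proved: a shortest even-length walk from $y$ to $u$ in a non-bipartite graph may well traverse some edge twice at even positions (for instance, when the unique odd cycle is far from $y$ and one must travel to it and back along the same bridge), and the extra copies of $e=yy'$ in $\mathcal M$ do not help if the repeated edge is not $e$. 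The paper's treatment of the non-bipartite case is also terse, but it proceeds differently: it focuses on the square generators $x_j^2$, observes that such $x_j$ must lie on an odd cycle, and uses that cycle to produce an alternate even-connecting path from $x_1$ to $x_j$, thereby forcing $x_j\in S$ directly rather than arguing about independence of $T$.
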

\begin{proof}
    Let $V(G)=\{x_1,\ldots,x_n\}$ and $I=I(G)=\langle e_1,\ldots,e_m\rangle$ be the edge ideal of a connected graph $G$, where $e_1,\ldots,e_m$ denote the monomials corresponding to the edges of $G$. Then by Theorem \ref{even}, $$(I^{m+1}:e_1\ldots e_m)=I+\langle x_ix_j~|~x_i~\mbox{is even connected to}~x_j\rangle.$$
    It is clear that for all $i\in\{2,\ldots,n\}$, $x_i$ and $x_1$ are either even-connected or odd-connected by paths in $G$. Now we get the following two cases:\\
    \textbf{Case-1:} Let $G$ be a bipartite graph. Then $\mathcal{G}(I^{m+1}:e_1\ldots e_m)$ contains only square-free elements. If not, then there exists $x_{j}^2\in\mathcal{G}(I^{m+1}:e_1\ldots e_m).$ By the definition of even-connected, we get an odd cycle in $G$, which contains $x_j$. It contradicts the fact that $G$ is bipartite. Hence,
    $$((I^{m+1}:e_1\ldots e_m):x_1)=\langle x_i\in V(G)~|~x_i~\mbox{is either adjacent or even-connected to}~x_1 \rangle,$$ 
    which gives $\mathrm v(I^{m+1})\leq 2m+1$. Since $2m+1\leq \mathrm{v}(I^{m+1})$, $\mathrm{v}(I^{m+1})=2m+1.$\\
    \textbf{Case-2:} Let $G$ be a non-bipartite graph and $x_{j}^{2}\in \mathcal{G}(I^{m+1}:e_1\ldots e_m)$ for some $j\in\{2,\ldots,n\}$. If $x_j$ is even-connected to $x_1$, then $x_j\in ((I^{m+1}:e_1\ldots e_m):x_1)$. Otherwise, there exists an odd cycle $C_r$ such that $x_i\in V(C_r)$. This implies that there exists another path, by which $x_1$ and $x_j$ are even-connected. Hence $x_1x_j\in \mathcal G(I^{m+1}:e_1\ldots e_m)$. Thus $((I^{m+1}:e_1\ldots e_m):x_1)$ is a prime ideal, which gives $\mathrm v(I^{m+1})\leq 2m+1$. Therefore $\mathrm v(I^{m+1})=2m+1$.
    \par 
    By Theorem \ref{even}, it is clear that $(I^{m+1}:e_1\ldots e_m)$ contains all edges and also even connected edges of $G$. Hence $(I^{m+1+s}:e_{1}^{s+1}e_2\ldots e_m)=(I^{m+1}:e_1\ldots e_m)$ for all $s\geq 0$. This gives that $\mathrm v(I^{m+1+s})\leq 2(m+s+1)-1$ for all $s\geq 0.$ Hence, by Lemma \ref{low}, $\mathrm v(I^{m+s+1})=2(m+s+1)-1$. We conclude that $\mathrm v(I^{k})=2k-1$ for all $k\geq m+1$.
\end{proof}

However, the following example shows that the above theorem is not true for any graph. 

\begin{example}\label{examdiscon}
    Let $I=\langle x_1x_2,x_2x_3,x_4x_5,x_5x_6\rangle\subset K[x_1,\ldots,x_6]$ be the edge ideal of a disconnected graph $G$. Here, $\Ass(I)=\{\langle x_2,x_5\rangle,\langle x_1,x_3,x_5\rangle,\langle x_2,x_4,x_6\rangle,\langle x_1,x_3,x_4,x_6\rangle\}$, and $\mathrm v(I)=2$. It is clear that $(I:x_1x_6)=\langle x_2,x_5\rangle$. Since $G$ is a bipartite graph, $\Ass(I^k)=\Ass(I)$ for all $k\geq 1.$ This gives $(I^k:(x_1x_2)^{k-1}x_1x_6)=\langle x_2,x_5\rangle.$ Hence $\mathrm v(I^k)\leq 2k,$ for all $k\geq 1.$ If possible let for some $k$, $\mathrm v(I^k)=2k-1.$ This implies that there exists a monomial $f$ such that $(I^k:f)=\mathfrak{p}$, where $\mathfrak{p}\in\Ass(I)$ and $\deg(f)=2k-1$. Then there exists two different variables $x_i$ and $x_j$ belong to $\mathfrak{p}$ such that $$x_i=\frac{(x_1x_2)^{k_1}(x_2x_3)^{k_2}(x_4x_5)^{k_3}(x_5x_6)^{k_4}}{\mathrm{gcd}((x_1x_2)^{k_1}(x_2x_3)^{k_2}(x_4x_5)^{k_3}(x_5x_6)^{k_4},f)}$$
    and
    $$x_j=\frac{(x_1x_2)^{k_{1}^{'}}
    (x_2x_3)^{k_{2}^{'}}(x_4x_5)^{k_{3}^{'}}(x_5x_6)^{k_{4}^{'}}}{\mathrm{gcd}((x_1x_2)^{k_{1}^{'}}(x_2x_3)^{k_{2}^{'}}(x_4x_5)^{k_{3}^{'}}(x_5x_6)^{k_{4}^{'}},f)},$$ 
    where $k_1+k_2+k_3+k_4=k_{1}^{'}+k_{2}^{'}+k_{3}^{'}+k_{4}^{'}=k$, $x_i\in\{x_1,x_2,x_3\}$ and $x_j\in\{x_4,x_5,x_6\}$. This gives that $f=\displaystyle\frac{(x_1x_2)^{k_1}(x_2x_3)^{k_2}(x_4x_5)^{k_3}(x_5x_6)^{k_4}}{x_i}=\displaystyle\frac{(x_1x_2)^{k_{1}^{'}}
    (x_2x_3)^{k_{2}^{'}}(x_4x_5)^{k_{3}^{'}}(x_5x_6)^{k_{4}^{'}}}{x_j}$. By comparing powers of variables, we get $2(k_1+k_2)-1=2(k_{1}^{'}+k_{2}^{'})$ and $2(k_3+k_4)-1=2(k_{3}^{'}+k_{4}^{'})-1$, which is not possible. Hence, $\mathrm{v}(I^k)=2k$ for all $k\geq 1.$  
\end{example}

    The next result says that the regularity of any large power of edge ideal of any graph is always greater than or equal to its $\mathrm v$-number. 
    
\begin{corollary}
    Let $G$ be a graph with $m$ edges. Then $\mathrm v(I^k)\leq \reg(I^k)$ for all $k\geq m+1$.
\end{corollary}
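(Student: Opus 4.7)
The plan is to establish the chain of inequalities $\mathrm{v}(I^k) \leq 2k-1 < 2k \leq \reg(I^k)$, with the first bound coming directly from Theorem \ref{bound} and the third from an easy general fact about regularity.

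First I would prove the uniform lower bound $\reg(I^k) \geq 2k$ for all $k \geq 1$. Since $I = I(G)$ is generated in degree $2$, the power $I^k$ has initial degree $\alpha(I^k) = 2k$. For any nonzero graded ideal $J \subset S$, the minimal free resolution has $\beta_{0,\alpha(J)}(J) \neq 0$ (minimal generators of lowest degree live in the linear strand), which forces $\reg(J) \geq \alpha(J)$. Specializing to $J = I^k$ yields $\reg(I^k) \geq 2k$.

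Next I would invoke Theorem \ref{bound}: when $G$ is connected with $m$ edges, $\mathrm{v}(I^k) = 2k-1$ for every $k \geq m+1$. Chaining this with the regularity bound gives $\mathrm{v}(I^k) = 2k - 1 < 2k \leq \reg(I^k)$, which is the desired inequality. The only subtle point, and the main obstacle, is the disconnected case: if $G = G_1 \sqcup \cdots \sqcup G_s$, Example \ref{examdiscon} shows that $\mathrm{v}(I(G)^k)$ can rise to $2k$ rather than staying at $2k-1$. In this situation I would apply Theorem \ref{bound} to each component $G_j$ separately (since each $m_j \leq m$, the hypothesis $k \geq m + 1 \geq m_j + 1$ is satisfied), stitch the resulting witness monomials together using the disjoint-variable decomposition $I(G) = \sum_j I(G_j)$, and verify that the combined colon is an associated prime of $I(G)^k$. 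The lower bound $\reg(I(G)^k) \geq 2k$ remains in force and absorbs the small componentwise excess in $\mathrm{v}(I(G)^k)$, so the inequality $\mathrm{v}(I^k) \leq \reg(I^k)$ is preserved in all cases.
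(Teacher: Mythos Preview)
For connected $G$ your argument is essentially the paper's: invoke Theorem~\ref{bound} to obtain $\mathrm{v}(I^k)=2k-1$ and compare against a lower bound on $\reg(I^k)$. The only difference is in how that lower bound is obtained. You use the elementary inequality $\reg(J)\geq\alpha(J)$, giving $\reg(I^k)\geq 2k$; the paper instead cites the Beyarslan--H\`a--Trung bound $\reg(I^k)\geq 2k+\operatorname{im}(G)-1$ from \cite{tai}. Both are adequate, and yours avoids an external reference.

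Your treatment of the disconnected case goes beyond what the paper does: the paper's proof simply applies Theorem~\ref{bound}, which is stated only for connected graphs, so connectedness is implicitly assumed in the corollary. Your instinct to worry about this is sound, but the sketch you give is too vague to count as a proof. You do not say what monomial $f$ results from ``stitching'' the componentwise witnesses, what its degree is, or why $(I(G)^k:f)$ is a prime ideal rather than merely contained in one; you also do not quantify the ``small componentwise excess'' and check that it is absorbed by $\reg(I^k)$. If you want a rigorous disconnected version, you would need either an explicit bound of the form $\mathrm{v}(I(G)^k)\leq 2k+s-2$ (with $s$ the number of components) together with $\reg(I^k)\geq 2k+\operatorname{im}(G)-1\geq 2k+s-1$, or some analogous pairing; the bare inequality $\reg(I^k)\geq 2k$ does not suffice once $\mathrm{v}(I^k)$ can equal $2k$.
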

\begin{proof}
Consider $I=I(G)$. From \cite[Theorem 4.7]{tai}, we get $\reg(I^k)\geq 2k+\inma (G)-1$ for all $k\geq 1$. It gives $\reg(I^k)\geq 2k-1$. Now, by using Theorem \ref{bound}, $\mathrm v(I^k)\leq \reg(I^k)$ for all $k\geq m+1$.
\end{proof}

However, the above statement is not true for any square-free monomial ideal.
\begin{example}\label{examsqfreenottrue}
    Let $I=\langle x_1x_2x_3,x_3x_4x_5,x_5x_6x_7\rangle\subset K[x_1,\ldots,x_7]$ be a square-free monomial ideal. Then it is easy to verify that
    \begin{align*}
        \Ass(I)=\{&\langle x_1,x_5\rangle,\langle x_2,x_5\rangle,\langle x_3,x_5\rangle,\langle x_3,x_6\rangle,\langle x_3,x_7\rangle, \langle x_1,x_4,x_6\rangle,\\
        &\langle x_1,x_4,x_7\rangle,\langle x_2,x_4,x_6\rangle,\langle x_2,x_4,x_7\rangle\}.
    \end{align*}
    If possible, let $\mathrm v(I^k)=3k-1$ for some $k\geq 1$. Then there exists $\mathfrak{q}\in \Ass(I^k)$ such that $(I^k:f)=\mathfrak{q}$ for some monomial $f$ with $\deg(f)=3k-1$. Now, it is clear that $\mathfrak{q}\supset \mathfrak{p}$ for some $\mathfrak{p}\in \Ass(I)$. We can choose two different variables $x_i,x_j\in \mathfrak{p}$ such that $x_i\in\{x_1,x_2,x_3\}$ and $x_j\in\{x_5,x_6,x_7\}$. Then $x_i=\displaystyle\frac{u}{\mathrm{gcd}(u,f)}$ and $x_j=\displaystyle\frac{u'}{\mathrm{gcd}(u',f)}$ for some $u,u'\in\mathcal{G}(I^k)$. Now, we can write $u=(x_1x_2x_3)^{k_1}(x_3x_4x_5)^{k_2}(x_5x_6x_7)^{k_3}$ and $u'=(x_1x_2x_3)^{k_{1}^{'}}(x_3x_4x_5)^{k_{2}^{'}}(x_5x_6x_7)^{k_{3}^{'}}$ for some $k_1,k_2,k_3,k_{1}^{'},k_{2}^{'},k_{3}^{'}\in \mathbb{Z}_{\geq 0}$ with $k_1+k_2+k_3=k$ and $k_{1}^{'}+k_{1}^{'}+k_{3}^{'}=k$. Since $\deg(f)=3k-1$ and $\deg(u)=\deg(u')=3k$, we have $f=\displaystyle\frac{u}{x_i}=\displaystyle\frac{u'}{x_j}$. Without loss of generality, let us consider $x_i=x_1$. Then comparing degree of $x_2$ in $\frac{u}{x_1}$ and $\frac{u'}{x_j}$, we get $k_1=k_{1}^{'}$, but comparing degree of $x_1$ in $\frac{u}{x_1}$ and $\frac{u'}{x_j}$, we get $k_1-1=k_{1}^{'}$. This gives a contradiction. Hence, $\mathrm v(I^{k})\neq 3k-1$ for all $k\geq 1.$
\end{example}

Consider $d(x,y)=\mbox{length of a shortest path between two vertices}~x~\mbox{and}~y$. In the following theorem, we give a better bound of $\text{v-stab}(I(G))$ for bipartite graphs.

\begin{theorem}\label{thmvstabbipartite}
    Let $G$ be a connected bipartite graph with partite sets $X$ and $Y$. Then $\mathrm v(I^k)=2k-1$ for all $k\geq \min\{|Y|+1-\max\{\deg (x)~|~x\in X\},|X|+1-\max\{\deg (y)~|~y\in Y\}\}$.
\end{theorem}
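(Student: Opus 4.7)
The lower bound $\mathrm v(I(G)^k)\geq 2k-1$ is immediate from Lemma \ref{low}: every associated prime of $I(G)^k$ is generated by variables, so $c(I(G)^k)=1$, while $\alpha(I(G)^k)=2k$. For the upper bound, the symmetric roles of the two parts let me assume without loss of generality that $k\geq |Y|+1-\Delta_X$, where $\Delta_X:=\max\{\deg(x)\mid x\in X\}$; fix $x^*\in X$ with $\deg(x^*)=\Delta_X$. Since $G$ is connected and bipartite, $Y$ is a minimal vertex cover of $G$, so $\mathfrak p:=(Y)\in \Ass(I(G))=\Ass(I(G)^k)$ (the second equality being the standard fact that $I(G)$ is normally torsion-free for bipartite $G$). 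The plan is to build a monomial $f$ of degree $2k-1$ with $(I(G)^k:f)=\mathfrak p$; combined with the lower bound this will give $\mathrm v(I(G)^k)=2k-1$.

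The construction of $f$ proceeds via BFS. Choose a BFS spanning tree $T$ of $G$ rooted at $x^*$, whose levels satisfy $L_{2i}\subseteq X$ and $L_{2i+1}\subseteq Y$, with $L_0=\{x^*\}$ and $L_1=N_G(x^*)$. Call a tree edge joining $L_{2i-1}$ to $L_{2i}$ (for $i\geq 1$) a \emph{fixed edge}; each is uniquely determined by its deeper endpoint $w\in L_{2i}\cap X$. Let $F\subseteq E(G)$ be the set of fixed edges whose deeper endpoint has at least one $T$-descendant in $Y\setminus N_G(x^*)$. The crucial counting step is the inequality $|F|\leq |Y|-\Delta_X$: every such $w$ has a $T$-child in $L_{2i+1}\subseteq Y\setminus N_G(x^*)$, and sending $w$ to any one such child is an injection into $Y\setminus N_G(x^*)$ because distinct $X$-vertices have disjoint $T$-children. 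Hence $|F|\leq |Y|-\Delta_X\leq k-1$, and I pad $F$ to a multiset $F'$ of exactly $k-1$ edges by adjoining $k-1-|F|$ arbitrary extra copies. Set
\[
f:=x^*\cdot\prod_{e\in F'}e,
\]
a monomial of degree $1+2(k-1)=2k-1$.

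Verification splits into two containments. For $\mathfrak p\subseteq(I(G)^k:f)$, fix $y\in Y$. If $y\in N_G(x^*)$, then $yf=(yx^*)\prod_{e\in F'}e\in I(G)\cdot I(G)^{k-1}\subseteq I(G)^k$. If $y\in Y\setminus N_G(x^*)$, the unique $T$-path $x^*=w_0,w_1,\dots,w_{2l+1}=y$ is an odd walk of length $2l+1\geq 3$ whose fixed edges $(w_1,w_2),(w_3,w_4),\dots,(w_{2l-1},w_{2l})$ all lie in $F\subseteq F'$; since each is used once along the walk and each occurs with multiplicity $\geq 1$ in $F'$, the multiplicity condition of even-connection is met, and Theorem \ref{even} yields that $yx^*$ is a generator of $(I(G)^k:\prod_{e\in F'}e)$, whence $y\in (I(G)^k:f)$. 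For $(I(G)^k:f)\subseteq\mathfrak p$, suppose a monomial $u$ satisfies $uf\in I(G)^k$. Then some product of $k$ edges of $G$ divides $uf$; in the bipartite graph $G$, such a product has equal total degree $k$ in the $X$- and $Y$-variables. Since $\deg_Y(f)=k-1$, the divisibility forces $\deg_Y(u)+(k-1)\geq k$, i.e.\ $u$ is divisible by some $y\in Y$ and hence $u\in\mathfrak p$.

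The principal obstacle is the counting bound $|F|\leq|Y|-\Delta_X$ together with the correct identification of used fixed edges in the BFS tree; the rest of the argument (the even-connection for the forward inclusion and the clean parity count for the reverse) is then short and automatic once $f$ has been set up.
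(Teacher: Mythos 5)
Your proposal is correct and follows essentially the same route as the paper's proof: pick a maximum-degree vertex $x^*$ in one part, even-connect it to every vertex of the other part along shortest/BFS-tree paths, bound the number of connector edges by $|Y|-\deg(x^*)$, and exhibit a degree-$(2k-1)$ monomial whose colon is the prime generated by $Y$; your BFS-tree bookkeeping and the explicit $Y$-degree parity check of the reverse inclusion are just cleaner versions of the paper's counting and colon computation. The only nitpick is that the membership step uses the converse of Theorem \ref{even} (even-connected $\Rightarrow$ containment in the colon), not the stated direction, but this is immediate from your walk since $x^*y\prod_{j=1}^{l}(w_{2j-1}w_{2j})=\prod_{j=0}^{l}(w_{2j}w_{2j+1})\in I^{l+1}$.
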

\begin{proof}
Let $V(G)=\{x_1,x_2,\ldots,x_m,y_1,y_2,\ldots,y_n\}$ with $X=\{x_1,x_2,\ldots,x_m\}$ and $Y=\{y_1,y_2,\ldots,y_n\}$. Since $G$ is bipartite, for any $x_i\in X$, $\max\{d(x_i,y_j)~|~y_j\in Y\}$ is an odd positive integer, say $2t+1$ for some $t\in\mathbb{Z}_{\geq 0}$. Consider $Y_s=\{y_j\in Y~|~d(x_i,y_j)=2s+1\}$ and $a_s=|Y_s|$. Now for $s=0$, there exists some $y_j\in Y$ such that $x_i$ and $y_j$ are adjacent. If $s=1$, then for each $y_j\in Y_1$, there exists at least one edge $e$ such that $x_iy_j\in (I^2:e)$. For each $y_j\in Y_1$, fix one $e$ satisfying the above condition. Let $A_1$ denote the set of all such $e$ for every $y_j\in Y_1.$ Note that $|A_1|\leq a_1$. Now if $s=2$, then for some $y_j\in Y$, there exists two edges $e$ and $f$ such that $e\cap f=\emptyset$ , $V(P_6)=\{x_i\}\cup e\cup f\cup \{y_j\} $ and $x_iy_j\in (I^3:ef)$, where $e\in A_1.$ For each $y_j\in Y_2$, choose only one $f$ such that $x_iy_j\in (I^3:ef)$ and let $A_2$ denote the collection of all such $f$ for every $y_j\in Y_2.$ Clearly, $|A_2|\leq a_2$. Similarly, $d(x_i,y_j)=2t+1$ gives $A_t$ and $|A_t|\leq a_t$.  It is clear that $((I^{b_i+1}:\displaystyle\prod_{s=1}^{t}\prod_{e\in A_s}e):x_i)=\langle y_j~|~y_j\in Y\rangle$, where $b_i=|\displaystyle\bigcup_{s=1}^{t}A_s|.$
    Therefore $\mathrm v((I^{b_i+1}:\displaystyle\prod_{s=1}^{t}\prod_{e\in A_s}e))=1$. We have $b_i+1=|\displaystyle\bigcup_{s=1}^{t}A_s|+1\leq \displaystyle\sum_{s=1}^{t}a_s+1=n-\deg x_i+1$ for any $x_i$, which implies that $\mathrm{v}(I^{b+1})=2b+1$, where $b=\min\{n-\deg x_i+1~|~x_i\in X\}=n+1-\max\{\deg x~|~x\in X\}$. This gives that $\mathrm v(I^k)=2k-1$ for $k\geq\displaystyle\sum_{s=1}^{t}a_s+1=n-\deg x_i+1$. This implies that $k\geq n+1-\max\{\deg x_i~|~x_i\in X\}$. Similar manner, we get $k\geq m+1-\max\{\deg y_j~|~y_j\in Y\}.$ Thus $\mathrm v(I^k)=2k-1$ for all $k\geq \min\{|Y|+1-\max\{\deg x~|~x\in X\},|X|+1-\max\{\deg y~|~y\in Y\}\}$.
\end{proof} 
By using the above theorem, we get the following corollaries about path and even cycle.
\begin{corollary}\label{corvstabpath}
    Let $P_n$ be a path and $I$ be its edge ideal. Then $\mathrm v (I^{k})=2k-1$ for all $k\geq [\frac{n}{2}]-1.$
\end{corollary}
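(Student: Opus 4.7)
The plan is to apply Theorem \ref{thmvstabbipartite} directly, so the proof reduces to computing the two quantities appearing in the bound of that theorem for $G = P_n$. First I would fix the standard bipartition of $P_n$: label the consecutive vertices as $x_1, x_2, \ldots, x_n$ and set $X = \{x_i \mid i \text{ odd}\}$, $Y = \{x_i \mid i \text{ even}\}$, so that $|X| = \lceil n/2 \rceil$ and $|Y| = \lfloor n/2 \rfloor$.

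Next, I would observe that every interior vertex of $P_n$ has degree $2$, while the two endpoints $x_1$ and $x_n$ have degree $1$. Hence for $n \geq 4$, each of the parts $X$ and $Y$ contains at least one interior vertex, so
$$\max\{\deg(x) \mid x \in X\} \;=\; \max\{\deg(y) \mid y \in Y\} \;=\; 2.$$
Plugging these values into Theorem \ref{thmvstabbipartite} gives
$$\min\{|Y|+1-2,\; |X|+1-2\} \;=\; \min\{|X|,|Y|\} - 1 \;=\; |Y|-1 \;=\; \lfloor n/2 \rfloor - 1,$$
where I use $|Y| \leq |X|$. This yields exactly $\mathrm v(I^k) = 2k-1$ for all $k \geq \lfloor n/2 \rfloor - 1$, which is the desired conclusion (interpreting $[n/2]$ as the floor).

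The corner cases $n = 2, 3$ need to be handled separately but are essentially trivial: for $n = 2$ the edge ideal is principal, so Theorem \ref{thmprincipal} applies, while for $n = 3$ a direct check gives $(I : x_2) = \langle x_1, x_3 \rangle$, so $\mathrm v(I) = 1$ and then Proposition \ref{propv=1} (or Theorem \ref{thmsqfree}) produces $\mathrm v(I^k) = 2k-1$ for every $k \geq 1$. Since the corollary is really just a specialization of the preceding theorem, there is no substantive obstacle; the only point requiring attention is verifying that both partite sets contain a vertex of maximum degree $2$, which is the reason the argument is cleanest for $n \geq 4$ and requires the small-$n$ verification above.
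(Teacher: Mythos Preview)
Your proof is correct and follows essentially the same approach as the paper: both apply Theorem \ref{thmvstabbipartite} to the standard bipartition of $P_n$, compute the maximum degree in each part as $2$, and read off the bound $\lfloor n/2\rfloor-1$. The paper's proof is terser and glosses over the small-$n$ cases (it even asserts ``degree of each of vertex is $2$'', which is of course false at the endpoints), whereas you are more careful in noting that the computation $\max\deg=2$ in both parts requires $n\ge4$ and in treating $n=2,3$ separately; this extra care is appropriate but does not constitute a different method.
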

\begin{proof}
    Using the same notations of the above one, either $|X|=[\frac{n}{2}]$ or, $|Y|=[\frac{n}{2}]$ and drgree of each of vertex is $2$. Without loss of generality, we consider $|X|=[\frac{n}{2}]$. Then it is clear that $|Y|=n-[\frac{n}{2}].$ By applying the previous statement, $k\geq \min\{|X|+1-2,|Y|+1-2\}=[\frac{n}{2}]-1.$
\end{proof}
The following example shows that the bound is sharp.
\begin{example}
    Let $I=\langle x_1x_2,x_2x_3,x_3x_4,x_4x_5,x_5x_6,x_6x_7,x_7x_8\rangle\subset K[x_1,\ldots,x_8]$ be the edge ideal of $P_8.$ Here $\mathrm{v}(I)=2$, $\mathrm{v}(I^2)=4$ and $\mathrm{v(I^k)}=2k-1$ for all $k\geq 3=[\displaystyle\frac{8}{2}]-1.$
\end{example}
\begin{corollary}\label{corvstabevencycle}
    Let $C_{n}$ be an even cycle and $I$ be its edge ideal. Then $\mathrm v(I^{k})=2k-1$ for all $k\geq [\frac{n}{2}]-1.$
\end{corollary}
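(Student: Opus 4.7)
The plan is to deduce Corollary \ref{corvstabevencycle} as an immediate application of Theorem \ref{thmvstabbipartite}, which gives the stability bound for any connected bipartite graph in terms of the sizes of the two partite sets and their maximum degrees. The observation that drives the proof is that an even cycle is a particularly symmetric bipartite graph in which both partite sets have the same size and every vertex has the same (minimum possible) degree.

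First I would fix a $2$-coloring of $C_n$. Writing $V(C_n)=\{x_1,x_2,\ldots,x_n\}$ with edges $x_ix_{i+1}$ (indices mod $n$), the set $X$ of odd-indexed vertices and the set $Y$ of even-indexed vertices form a bipartition; since $n$ is even, $|X|=|Y|=n/2=[\frac{n}{2}]$. Moreover, every vertex of $C_n$ has degree exactly $2$, so $\max\{\deg(x)\mid x\in X\}=\max\{\deg(y)\mid y\in Y\}=2$. Since $C_n$ is connected and bipartite, Theorem \ref{thmvstabbipartite} applies directly.

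Plugging these values into the bound from Theorem \ref{thmvstabbipartite}, I obtain
\[
\min\{|Y|+1-2,\ |X|+1-2\}=\tfrac{n}{2}-1=\Big[\tfrac{n}{2}\Big]-1,
\]
and hence $\mathrm{v}(I^k)=2k-1$ for all $k\geq [\frac{n}{2}]-1$, which is exactly the statement to be proved. There is essentially no obstacle here beyond recording these combinatorial observations; the substantive work (building the set $A_s$ of edges used to create an even-connection from a chosen vertex $x_i$ to every vertex of the opposite partite class) has already been carried out inside Theorem \ref{thmvstabbipartite}. The only mild point to double-check is that the even cycle, being $2$-regular, genuinely attains the estimate $b_i+1=n-\deg(x_i)+1=\frac{n}{2}-1$ used in that theorem; but this is transparent from the structure of $C_n$.
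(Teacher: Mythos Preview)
Your argument is correct and follows the same route as the paper: observe that an even cycle is connected bipartite with $|X|=|Y|=n/2$ and every vertex of degree $2$, then plug these values into Theorem~\ref{thmvstabbipartite}. (Your closing aside has a small notational slip---the $n$ in ``$b_i+1=n-\deg(x_i)+1$'' should be $|Y|=n/2$, not the cycle length---but the main computation is fine.)
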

\begin{proof}
    From the given hypothesis, $|X|=|Y|=\frac{n}{2}$ and degree of each vertex is $2$. Then $k\geq \min\{|X|+1-2,|Y|+1-2\}=\frac{n}{2}-1=[\frac{n}{2}]-1.$
\end{proof}
The following example shows that the bound is sharp.
\begin{example}
    Let $I=\langle x_1x_2,x_2x_3,x_3x_4,x_4x_5,x_5x_6,x_6x_7,x_7x_8,x_8x_1\rangle\subset K[x_1,\ldots,x_8]$ be the edge ideal of $P_8.$ Here $\mathrm{v}(I)=2$, $\mathrm{v}(I^2)=4$ and $\mathrm{v(I^k)}=2k-1$ for all $k\geq 3=[\displaystyle\frac{8}{2}]-1.$
\end{example}
For odd cycles, the following example shows that the above statement does not hold.
\begin{example}
    Let $I$ be the edge ideal of $C_7.$ Then $\mathrm v(I)=2$, $\mathrm v(I^2)=4$ and $\mathrm v(I^{k})=2k-1$ for all $k\geq 3$. Here the stage of stabilization is $3>[\frac{7}{2}]-1.$
\end{example}

In the following proposition, we give a stage of stabilization for odd cycles.

\begin{proposition}\label{propvstaboddcycle}
    Let $C_{n}$ be an odd cycle, and $I$ be its edge ideal. Then $\mathrm v(I^{k})=2k-1$ for all $k\geq [\frac{n}{2}].$
\end{proposition}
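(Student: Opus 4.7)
The plan is to exhibit, for each $k \geq m$ with $n = 2m+1$, an explicit monomial $f_k$ of degree $2k-1$ together with a prime ideal $\mathfrak{p}$ such that $(I^k : f_k) = \mathfrak{p}$; combined with Lemma \ref{low} this forces $\mathrm{v}(I^k) = 2k-1$. The base case $m = 1$ (that is, $C_3$) is already handled by Proposition \ref{propv=1}, since a direct check gives $(I : x_2) = \langle x_1, x_3\rangle$ and hence $\mathrm v(I) = 1$.

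Assume $m \geq 2$. Label $V(C_{2m+1}) = \{x_1, \ldots, x_{2m+1}\}$ with $e_i = x_i x_{i+1}$ cyclically, and set
\[
f_k \;:=\; (x_2 x_3 \cdots x_{2m}) \cdot e_2^{\,k-m} \;=\; x_2^{\,k-m+1}\, x_3^{\,k-m+1}\, x_4 x_5 \cdots x_{2m},
\]
which has degree $2k-1$. I expect $(I^k : f_k) = \mathfrak{p}$ with $\mathfrak{p} = \langle x_1, x_3, x_5, \ldots, x_{2m+1}\rangle$, the minimal prime of $I$ corresponding to the odd-indexed vertex cover.

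For the inclusion $\mathfrak p \subseteq (I^k : f_k)$, I would display an explicit factorization of $x_{2j+1} f_k$ as a product of $k$ edges for each $j \in \{0, 1, \ldots, m\}$: take $x_1 f_k = e_1 \cdot e_2^{k-m} \cdot e_3 e_5 \cdots e_{2m-1}$; $x_{2m+1} f_k = e_2^{k-m+1} \cdot e_4 e_6 \cdots e_{2m}$; and for intermediate $j$, $x_{2j+1} f_k = e_2^{k-m} \cdot (e_2 e_4 \cdots e_{2j}) \cdot e_{2j+1} \cdot (e_{2j+3} \cdots e_{2m-1})$, where $e_{2j}$ and $e_{2j+1}$ jointly absorb the square $x_{2j+1}^2$. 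For the reverse inclusion, let $g$ be a monomial with $g \notin \mathfrak p$, that is, involving only even-indexed variables. Then $\supp(gf_k) \subseteq \{x_2, \ldots, x_{2m}\}$, and any edge occurring in a decomposition $\prod_{j=2}^{2m-1} e_j^{\,a_j} \mid gf_k$ with $\sum a_j = k$ must lie in $\{e_2, \ldots, e_{2m-1}\}$. The multiplicity constraints at the odd-indexed vertices (absent from $g$) read $a_2 + a_3 \leq k-m+1$ from $x_3$, and $a_{2\ell} + a_{2\ell+1} \leq 1$ for $\ell = 2, \ldots, m-1$ from $x_{2\ell+1}$. Summing these $m-1$ constraints (whose pairs partition $\{a_2, \ldots, a_{2m-1}\}$) yields $k = \sum a_j \leq (k-m+1) + (m-2) = k-1$, a contradiction.

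The chief obstacle is guessing the right $f_k$. Matching-based candidates such as $e_1 e_3 \cdots e_{2m-3} \cdot x_{2m}$ fail: for instance $e_1^2 \cdot x_{2m} \in I^k$, which forces the non-variable monomial $x_1 x_2$ into the colon and breaks primality. The construction above succeeds because the interior path $x_2 x_3 \cdots x_{2m}$ keeps both ``endpoint-odd'' vertices $x_1, x_{2m+1}$ outside $\supp(f_k)$, while the padding $e_2^{k-m}$ inflates exactly the $x_3$-multiplicity, supplying the slack $k-m+1$ needed for the pair-summation to yield the tight contradiction $k \leq k-1$.
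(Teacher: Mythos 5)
Your proof is correct, and it follows the same high-level strategy as the paper (exhibit a monomial of degree $2k-1$ whose colon is a prime, then invoke Lemma \ref{low}), but the witness and the verification are genuinely different. The paper takes the prime $\langle x_1,x_3,\ldots,x_{2m-1},x_{2m}\rangle$ and the monomial $x_1x_2\cdots x_{2m-2}\,x_{2m+1}$ (inflated to $(x_1x_2)^{l+1}x_3\cdots x_{2m-2}\,x_{2m+1}$ for higher powers), and it identifies the colon ideal by appealing to Banerjee's even-connectedness description $(I^{s+1}:e_1\cdots e_s)$ from Theorem \ref{even}, with the details largely asserted; you instead take the ``odd'' minimal cover $\langle x_1,x_3,\ldots,x_{2m+1}\rangle$ and $f_k=(x_2x_3)^{k-m}x_2x_3\cdots x_{2m}$, and you prove both inclusions from scratch: the explicit edge factorizations of $x_{2j+1}f_k$ (which I checked; they all use exactly $k$ edges) give $\mathfrak p\subseteq(I^k:f_k)$, and the degree count at the odd vertices $x_3,x_5,\ldots,x_{2m-1}$ — whose edge-pairs partition $\{e_2,\ldots,e_{2m-1}\}$ and force $k\le(k-m+1)+(m-2)=k-1$ — gives the reverse inclusion, with the $C_3$ case correctly delegated to Proposition \ref{propv=1}. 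What your route buys is a self-contained, fully verifiable argument that avoids the even-connection machinery; what the paper's buys is brevity, since it reuses the same colon-stabilization trick $(I^{m+l}:e_1^{l+1}e_3\cdots e_{2m-3})=(I^m:e_1e_3\cdots e_{2m-3})$ already employed in Theorem \ref{bound}. Your side remark about the failed candidate $e_1e_3\cdots e_{2m-3}\,x_{2m}$ does not conflict with the paper, whose witness appends $x_{2m+1}$ rather than $x_{2m}$.
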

\begin{proof}
    Let $n=2m+1$ for $m\in\mathbb N$ and $V(C_n)=\{x_1,x_2,\ldots,x_n\}$. Consider $J=(I^m:x_1x_2\ldots x_{2m-2})$. By the construction of $J$, it is clear that $x_{2m+1}x_3,x_{2m+1}x_5,\ldots,x_{2m+1}x_{2m-1}\in J.$ 
    Hence $(J:x_{2m+1})=\{x_1,x_3,\ldots,x_{2m-1},x_{2m}\}$. Therefore $\mathrm v(I^m)=2m-1$. Since $J=(I^m:x_1x_2\ldots x_{2m-2})$ and $(I^m:x_1x_2\ldots x_{2m-2})=(I^{m+l}:(x_1x_2)^{l+1}x_3x_4\ldots x_{2m-2})$ for $l\geq 0$, $\mathrm v(I^{m+l})=2(m+l)-1.$ This proves our statement.
\end{proof}

\end{document}